\newtheorem{theorem}{Theorem}[section]
\newtheorem{lemma}[theorem]{Lemma}
\newtheorem{proposition}[theorem]{Proposition}
\newtheorem{corollary}[theorem]{Corollary}
\newcommand{\id}{\mbox{id}}
\newcommand{\Sym}{\operatorname{Sym}}
\newcommand{\Q}{\operatorname{Q}}
\newcommand{\V}{\tilde{v}}
\newcommand{\W}{\tilde{w}}
\newcommand{\FM}{\operatorname{FM}}
\newenvironment{proof}{\par\noindent{\bf Proof.}}{$\qed$\par\bigskip}
\newcommand{\qed}{\enspace\vrule  height6pt  width4pt  depth2pt}
\begin{document}

\title{Construction of a two unique product semigroup defined by permutation relations of quaternion type}

\author{Ferran Ced\'o\footnote{Research partially supported by a grant of MICIIN (Spain)
MTM2011-28992-C02-01.} \and Eric Jespers\footnote{Research supported
in part by Onderzoeksraad of Vrije Universiteit Brussel and Fonds
voor Wetenschappelijk Onderzoek (Belgium).} \and Georg Klein}
\date{}
\maketitle

\begin{abstract}
 For a regular representation $H\subseteq \Sym_n$ of the
generalized quaternion group of order $n=4k$, with $k\geq 2$, the
monoid $S_n(H)$ presented with generators $a_1,a_2,\dots ,a_n$ and
with relations $a_1a_2\cdots a_n=a_{\sigma(1)}a_{\sigma(2)}\cdots
a_{\sigma(n)}$, for all $\sigma\in H$, is investigated.
%
It is shown that $S_n(H)$  has the two unique product
property. As a consequence, for any field $K$,  the monoid
algebra $K[S_n(H)]$ is a domain with trivial units which is
semiprimitive.
\end{abstract}

\noindent {\it Keywords:} semigroup ring,
finitely presented, symmetric presentation, semigroup algebra, unique product, Jacobson radical,
semiprimitive. \\ {\it Mathematics
Subject Classification:}
Primary: 16S15, 16S36,
20M05; Secondary:  20M25, 16N20.

\section{Introduction}\label{intro}

The well-known unit conjecture for group algebras $KG$ of a group
$G$ over a field states that if $G$ is torsion-free then $KG$ only
has trivial units. That is, the only invertible elements of $KG$ are
$kg$, with $0\neq k\in K$ and $g\in G$. Very little progress has
been made on this conjecture. One of the few known results says that
if $G$ is a unique product group then the conjecture has a positive
answer. Recall that  a semigroup $S$ is said to be unique product
(u.p.) if  for any non-empty finite subsets  $C$ and $D$ of $S$
there is an element of $ C   D = \{c d \mid c \in C, d \in  d \}$
that is expressible uniquely as $c d$ with $c\in C$ and $d\in D$.
If, for any non empty finite subsets $C,
D$ with $|C| + |D|
> 2$, there exist at least two elements in $CD$ that are uniquely
expressible as $cd$, for $c\in C$ and $d\in D$ then $S$ is called a two unique product (t.u.p.) semigroup. Strojnowski
\cite{strojnowski} proved that u.p. groups are two unique product
groups. Using standard arguments, the unit conjecture
is proved easily for semigroup algebras of t.u.p. semigroups. Note
that such semigroups are necessarily cancellative. However,
semigroups which are u.p. are not necessarily t.u.p., a counter-example 
can be found in \cite[Example 10.13]{book-jan}. On the other
hand, in general, it is hard to verify whether  a (semi)group  is
t.u.p.. Rips and Segev, in \cite{ripssegev}, constructed examples of
torsion-free groups that are not t.u.p.,
and  Promislow, in
\cite{promislow}, constructed a simpler example.
So it turned out that not all torsion-free groups are t.u.p..

In the present article we construct a semigroup which is t.u.p.. The
idea comes from the finitely presented algebras defined by
homogeneous relations that are studied in
\cite{alt4algebra,alghomshort,altalgebra,symcyclic}. More precisely,
let $K$ be a field and let $H$ be a subgroup of the symmetric group
$\Sym_n$ of degree $n$, and consider the algebra $K\langle a_1 ,
\ldots , a_n \mid  a_1a_2\cdots a_n= a_{\sigma (1)}a_{\sigma
(2)}\cdots a_{\sigma (n)},\;  \sigma\in H\rangle$. Clearly, this is
the semigroup algebra $K[S_{n}(H)]$ over the monoid
$S_{n}(H)=\langle a_1 , \ldots , a_n \mid  a_1a_2\cdots a_n=
a_{\sigma (1)}a_{\sigma (2)}\cdots a_{\sigma (n)},\; \sigma\in
H\rangle$. In order to study these algebras it is important to study
the structure of the monoid $S_{n}(H)$. In the listed papers this
has been done in case, for example, $H$ is the symmetric or
alternating group, or an abelian group.  Furthermore in
\cite[Corollary~3.2]{symcyclic}, it is proved that $S_{n}(H)$ is
cancellative if and only if the stabilizers of $1$ and $n$ are
trivial, and in this case $S_n(H)$ is embedded in its universal
group $G_{n}(H)$, that is, the group defined by the ``same"
generators and relations. In particular, this happens if $H$ is a
regular subgroup of $\Sym_n$. n this paper we consider a
regular representation $H$ of the generalized quaternion group
$Q_{4k}$ of order $n=4k$ in $\Sym_n$, and we will show that
$S_{n}(H)$ is a t.u.p. semigroup. An immediate consequence is that
the algebra $K[S_n(H)]$ is a domain with trivial units, and thus the
algebra is semiprimititve. Note that $S_{n}(H)$ is embedded in
its universal group $G_{n}(H)$. We do not know whether $G_{n}(H)$
is a t.u.p. group.

\section{Preliminary results } \label{quaternionn}

In this section, we obtain properties of $S_{n}(H)$ in case $H$ is a
regular representation of the generalized quaternion group
$\Q_{4k} = \langle t, u \mid t^{2k}=u^4 = 1, t^k=u^2,  u^{-1}tu =
t^{-1} \rangle  $, with $n=4k \geq 8$. For simplicity, we  denote
$S_{n}(H)$ by $S_n$. In order to  represent $\Q_{4k}$ as a
regular subgroup $H$ of $\Sym_n$, we take its  Cayley representation
via left multiplication and we identify the  elements $t^{i}$ with $i+1$,
for $0\leq i\leq 2k-1$, and the elements $t^{i}u$ with $i+2k+1$, for
$0\leq i\leq 2k-1$. Thus $t$ corresponds to the product of $2$
disjoint cycles of length $2k$, and $u$ to the product of $k$
disjoint cycles of length $4$. Identifying $\Q_{4k}$ with this
representation, we get
$$t=(1,2, \ldots , 2k-1, 2k)(2k+1,2k+2, \dots
,4k-1, 4k)$$ and
\begin{eqnarray*}
u &=&(1,2k+1,1+k,2k+1+k)(2,4k,2+k,4k-k)(3,4k-1,3+k,4k-1-k)\cdots \\
& & \cdots(k,4k-(k-2),k+k,4k-(k-2)-k).
\end{eqnarray*}
The following two results are easy consequences of the definition of
$t$ and $u$ as permutations.
\begin{lemma}\label{other}
The generator $u$ must map an element from one of the cycles of $t$
to the other.
\end{lemma}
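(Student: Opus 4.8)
The plan is to identify the two cycles of $t$ with the two cosets of the cyclic subgroup $\langle t\rangle$, and then to use the fact that left multiplication by $u$ permutes these cosets nontrivially. First I would record which labels make up each cycle of $t$. From the definition, $t=(1,2,\dots,2k)(2k+1,\dots,4k)$, so the two cycles are $C_1=\{1,2,\dots,2k\}$ and $C_2=\{2k+1,\dots,4k\}$. Under the chosen identification these are precisely the powers $\{t^i : 0\le i\le 2k-1\}=\langle t\rangle$ and the elements $\{t^iu : 0\le i\le 2k-1\}=\langle t\rangle u$, i.e.\ the two left cosets of $\langle t\rangle$ in $\Q_{4k}$; since $\langle t\rangle$ has order $2k$ and index $2$, there are exactly these two.

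Next, because $H$ is the left-regular (Cayley) representation, the permutation $u$ acts as $x\mapsto ux$. As $\langle t\rangle$ is normal of index $2$ and $u\notin\langle t\rangle$, left multiplication by $u$ sends the coset $C_1=\langle t\rangle$ onto $u\langle t\rangle=\langle t\rangle u=C_2$, and sends $C_2=\langle t\rangle u$ onto $u\langle t\rangle u=\langle t\rangle u^2=\langle t\rangle t^k=C_1$. Hence $u$ interchanges the two cycles of $t$ entirely, and in particular maps elements of one cycle to the other, which is the assertion. Concretely, the identity $1\in C_1$ is sent to $u=t^0u$, i.e.\ to the label $2k+1\in C_2$, so $u(1)=2k+1$.

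As a sanity check I would read the same fact directly off the explicit cycle decomposition of $u$: its first $4$-cycle $(1,\,2k+1,\,k+1,\,3k+1)$ already alternates between $C_1$ and $C_2$ and exhibits $u(1)=2k+1$ with $1\in C_1$ and $2k+1\in C_2$. I do not expect any genuine obstacle: the statement amounts to the observation that $u$ lies outside the index-two subgroup $\langle t\rangle$, so its action by left multiplication cannot preserve the two cosets. The only point requiring care is bookkeeping — matching the abstract coset description to the explicit labelling and keeping the left-multiplication convention of the Cayley representation consistent throughout.
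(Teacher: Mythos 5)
Your proof is correct and matches the paper's intent: the paper states this lemma without proof as an ``easy consequence of the definition of $t$ and $u$ as permutations,'' and your coset argument (together with the direct check on the explicit $4$-cycle of $u$) is exactly the standard justification. Note that you in fact prove the stronger statement that left multiplication by $u$ interchanges the two cycles of $t$ entirely, which is precisely the content of the paper's Corollary~2.2.
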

\begin{corollary}\label{disjoi}
For $\sigma \in H$, either
\begin{eqnarray*}
 \{ \sigma(1) , \sigma(2),  \ldots , \sigma(n/2) \} &=& \{ 1, 2, \ldots , n/2 \} \\
\text{ and }\ \ \ \ \{ \sigma(n/2+1) , \sigma(n/2+2),  \ldots ,
\sigma(n) \} &=& \{ n/2+1, n/2+2, \ldots , n \},
\end{eqnarray*}
or
\begin{eqnarray*}
 \{ \sigma(1) , \sigma(2),  \ldots , \sigma(n/2) \} &=& \{ n/2+1, n/2+2, \ldots , n \} \\
\text{ and }\ \ \ \ \{ \sigma(n/2+1) , \sigma(n/2+2),  \ldots ,
\sigma(n) \} &=&  \{ 1, 2, \ldots , n/2 \} .
\end{eqnarray*}
\end{corollary}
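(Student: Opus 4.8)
The plan is to recast the dichotomy in the statement as membership in a single subgroup of $\Sym_n$, and then to check that both generators $t$ and $u$ of $H$ lie in it. Write $B_1 = \{1, 2, \ldots, n/2\}$ and $B_2 = \{n/2+1, \ldots, n\}$ for the supports of the two cycles of $t$, and let $\mathcal{P} = \{B_1, B_2\}$ be the associated partition of $\{1, \ldots, n\}$. The assertion of the corollary is precisely that each $\sigma \in H$ setwise stabilizes $\mathcal{P}$, i.e.\ that $\{\sigma(B_1), \sigma(B_2)\} = \{B_1, B_2\}$: the first alternative corresponds to $\sigma(B_1) = B_1$ and $\sigma(B_2) = B_2$, the second to $\sigma(B_1) = B_2$ and $\sigma(B_2) = B_1$.

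First I would record that the set $W = \{\sigma \in \Sym_n : \{\sigma(B_1), \sigma(B_2)\} = \mathcal{P}\}$ is a subgroup of $\Sym_n$; indeed it is the stabilizer of $\mathcal{P}$ under the natural action of $\Sym_n$ on partitions, and point stabilizers of a group action are always subgroups. Thus it suffices to show $t, u \in W$, since then $H = \langle t, u\rangle \subseteq W$ and every $\sigma \in H$ stabilizes $\mathcal{P}$, which is exactly the claim.

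Next I would verify the two generators separately. For $t$, each of its two cycles is supported inside a single block, so $t(B_1) = B_1$ and $t(B_2) = B_2$; hence $t \in W$. For $u$, I would read off its explicit factorization into $4$-cycles: each such cycle has the form $(a, b, c, d)$ with $a, c \in B_1$ and $b, d \in B_2$, so $u$ carries every element of $B_1$ into $B_2$ and every element of $B_2$ into $B_1$. Since $u$ is a bijection, the inclusion $u(B_1) \subseteq B_2$ forces $u(B_1) = B_2$ by cardinality, and similarly $u(B_2) = B_1$; hence $u \in W$. Combining the two steps gives $H \subseteq W$, which is the corollary.

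The argument is short, and the only point demanding care is the last one: Lemma~\ref{other} as stated guarantees merely that $u$ moves \emph{some} element from one $t$-cycle to the other, whereas here I need that $u$ interchanges the two blocks \emph{entirely}. This gap is closed by the explicit description of $u$ as a product of $4$-cycles alternating between $B_1$ and $B_2$, together with the cardinality argument above, so I do not expect a genuine obstacle.
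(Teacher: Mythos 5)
Your proof is correct, and it is essentially the argument the paper intends: the paper offers no explicit proof, calling the statement an easy consequence of the definitions of $t$ and $u$, and your generator-by-generator verification (each cycle of $t$ lies in a single block; each $4$-cycle of $u$ alternates between the blocks, so $u$ swaps them) is exactly that check, packaged via the subgroup of partition-preserving permutations. You are also right that Lemma~\ref{other} alone would not suffice and that the explicit factorization of $u$ is what closes the gap.
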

%

Since $H$ is a regular subgroup of $\Sym_n$ we have the
following result.

\begin{lemma}\label{stabilizerr}
For $\sigma \in H$ with $\sigma \neq \id$, and $1\leq i \leq n$, we
have $\sigma(i) \neq i$.
\end{lemma}

As a consequence of a result of Adjan\index{Adjan, S.} \cite{adyan1,
adyan2} (see also \cite[Theorem 3.1]{stallings}  and
\cite[Corollary~3.2]{symcyclic}), we obtain the following result.
\begin{proposition}\label{adjannn}
 $S_{n}(H)$ can be  embedded in its universal group $G_{n}(H)$. In particular, $S_n(H)$ is a cancellative monoid.
\end{proposition}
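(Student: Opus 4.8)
The statement to prove is Proposition~\ref{adjannn}, which asserts that $S_n(H)$ embeds in its universal group and is therefore cancellative. The proof must invoke the cited theorem of Adjan, so the plan is essentially to verify that the hypotheses of Adjan's embedding criterion are satisfied by our defining presentation.

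The plan is as follows. First, I would recall the precise form of Adjan's result as stated in \cite[Theorem~3.1]{stallings}: for a monoid presented by relations $w_\sigma = w'_\sigma$ in which every relation is length-preserving (homogeneous), one associates the \emph{left graph} and the \emph{right graph} on the vertex set of generators $\{a_1,\dots,a_n\}$, where two generators are joined by an edge in the left graph whenever they occur as the first letters of the two sides of some defining relation, and analogously for the right graph using the last letters. Adjan's theorem states that if both the left graph and the right graph are \emph{cycle-free} (acyclic as undirected graphs), then the monoid embeds in its universal group. For our presentation the defining relations are $a_1a_2\cdots a_n = a_{\sigma(1)}a_{\sigma(2)}\cdots a_{\sigma(n)}$ for $\sigma\in H$, so every relation is homogeneous of degree $n$, and the left (respectively right) graph has an edge $\{a_1,a_{\sigma(1)}\}$ (respectively $\{a_n,a_{\sigma(n)}\}$) for each nontrivial $\sigma\in H$.

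Next I would translate the acyclicity condition into the language of the group $H$. In the left graph, the only edges emanate from $a_1$, joining it to $a_{\sigma(1)}$ for $\sigma\in H$; such a graph (a union of edges all sharing the vertex $a_1$, i.e.\ a star) contains a cycle precisely when some edge is a loop, that is, when $\sigma(1)=1$ for some $\sigma\neq\id$, or when two distinct relations contribute the same edge in a way creating a multigraph cycle. The key point is therefore that the stabilizer of $1$ in $H$ must be trivial, and likewise the stabilizer of $n$ must be trivial; this is exactly what \cite[Corollary~3.2]{symcyclic} records as the criterion for cancellativity. Since $H$ is a regular subgroup of $\Sym_n$, Lemma~\ref{stabilizerr} guarantees $\sigma(i)\neq i$ for all $\sigma\neq\id$ and all $i$, so in particular the stabilizers of $1$ and $n$ are trivial. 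Thus both graphs are genuine stars with distinct leaves and hence cycle-free, and Adjan's theorem applies to yield the embedding into $G_n(H)$. Cancellativity is then immediate, since a submonoid of a group is cancellative.

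The only real obstacle is bookkeeping rather than mathematics: one must state Adjan's acyclicity hypothesis in a form that matches our homogeneous permutation relations and confirm that regularity of $H$ (via Lemma~\ref{stabilizerr}) forces the left and right graphs to be acyclic. Since the paper explicitly cites \cite{adyan1,adyan2}, \cite{stallings}, and \cite{symcyclic}, the cleanest exposition simply records that regularity implies trivial point stabilizers and then quotes \cite[Corollary~3.2]{symcyclic} directly, so that the proof reduces to the single observation that a regular permutation group has no nontrivial point stabilizers.
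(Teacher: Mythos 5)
Your proposal is correct and follows essentially the same route as the paper, which simply derives the proposition from Adjan's embedding theorem (via \cite[Theorem~3.1]{stallings} and \cite[Corollary~3.2]{symcyclic}) together with the fact that a regular subgroup of $\Sym_n$ has trivial point stabilizers (Lemma~\ref{stabilizerr}). Your extra detail on the left/right graphs being stars is a correct unpacking of the cited criterion, not a different argument.
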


 Because of the choice made for the generators $t$ and $u$, the number of overlapping letters
of two words in the defining relations of $S_n(H)$ is at most $1$. For example, in case $k=2$, the following overlap is possible
for words in the defining relations of $S_n$.
$$
\overbracket{
 a_{1} a_{2} a_{3} a_{4} a_{5} a_{6} a_{7}  
\makebox[0pt][l]{$\displaystyle{\underbracket{\phantom{
 a_{8} a_{7}a_{6}a_{5} a_{2} a_{1} a_{4} a_{3} 
} }_{\text{   } } } $}   
a_{8}                                            
}^{\text{   }}    
a_{7}a_{6}a_{5} a_{2} a_{1} a_{4} a_{3} .$$   
This motivates the
following lemmas and their corollaries on permutations in $H$.

Throughout this section, we let $\FM_n$ denote the free monoid of
rank $n$ with basis $\{ x_1, \dots , x_n \} $.

\begin{lemma}\label{notPossible}
For $\sigma, \tau \in H$ and integers $p,q$ with $1 \leq p \leq
\frac{n}{2}-1$ and $\frac{n}{2} < q \leq n-1$,
$x_{\sigma(p)}x_{\sigma (p+1)}\neq x_{\tau(q)}x_{\tau(q+1)}$.
\end{lemma}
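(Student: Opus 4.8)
The plan is to argue by contradiction. Suppose $x_{\sigma(p)}x_{\sigma(p+1)} = x_{\tau(q)}x_{\tau(q+1)}$ in $\FM_n$. Since both sides are words of length two, comparing letters gives $\sigma(p) = \tau(q)$ and $\sigma(p+1) = \tau(q+1)$; set $m := \sigma(p) = \tau(q)$. The first step is to exploit the ranges of $p$ and $q$: because $1 \le p \le \frac{n}{2}-1$ we have $p+1 = t(p)$, and because $\frac{n}{2} < q \le n-1$ we have $q+1 = t(q)$, using that $t$ acts as the $2k$-cycle $(1,2,\dots,2k)$ on the first half and as $(2k+1,\dots,4k)$ on the second half. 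Substituting $p = \sigma^{-1}(m)$ and $q = \tau^{-1}(m)$ into $\sigma(p+1) = \tau(q+1)$ then rewrites this equation as $(\sigma t \sigma^{-1})(m) = (\tau t \tau^{-1})(m)$.

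Next I would use the conjugacy structure of $H \cong \Q_{4k}$. Every element of $H$ lies in $\langle t\rangle$ or in $\langle t\rangle u$, and the defining relation $u^{-1}tu = t^{-1}$ gives $\sigma t \sigma^{-1} = t$ when $\sigma \in \langle t\rangle$ and $\sigma t \sigma^{-1} = t^{-1}$ when $\sigma \in \langle t\rangle u$; the same dichotomy holds for $\tau$. Thus each of $\sigma t\sigma^{-1}$ and $\tau t\tau^{-1}$ equals $t$ or $t^{-1}$, with the value $t$ occurring exactly when the element preserves the two halves $\{1,\dots,n/2\}$ and $\{n/2+1,\dots,n\}$; recall that $t$ preserves these two halves, while $u$ interchanges them by Lemma~\ref{other}.

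The key step is to pin down which conjugate occurs, and here Corollary~\ref{disjoi} does the work. Since $p$ lies in the first half, $\sigma(p)$ lies in the first half precisely when $\sigma$ preserves the halves; since $q$ lies in the second half, $\tau(q)$ lies in the second half precisely when $\tau$ preserves the halves. For the common value $m$ to lie in a single half, therefore, one of $\sigma,\tau$ must preserve the halves and the other must interchange them. Consequently $\{\sigma t\sigma^{-1},\, \tau t\tau^{-1}\} = \{t, t^{-1}\}$, so the rewritten equation becomes $t(m) = t^{-1}(m)$, that is $t^2(m) = m$. But $t$ acts as a $2k$-cycle on the half containing $m$, so $t^i(m) = m$ forces $2k \mid i$, and $2k \nmid 2$ since $k \ge 2$; this contradiction completes the proof.

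I expect the only real subtlety to be the bookkeeping in the third step, namely deducing from Corollary~\ref{disjoi} that the half-preserving behaviours of $\sigma$ and $\tau$ are \emph{opposite}. This is exactly the point that excludes the otherwise innocuous cases $\sigma t\sigma^{-1} = \tau t\tau^{-1}$, in which the equation $(\sigma t\sigma^{-1})(m) = (\tau t\tau^{-1})(m)$ would hold trivially and yield no contradiction.
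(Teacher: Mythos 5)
Your proof is correct, and it is essentially the same argument as the paper's: the paper works with the single permutation $\tau^{-1}\sigma$, uses Lemma~\ref{other} to force $\tau^{-1}\sigma = t^{l}u$, and computes $\tau^{-1}\sigma(p+1)=t^{-1}(q)\neq q+1$. Your version repackages exactly the same ingredients (the relation $utu^{-1}=t^{-1}$, the fact that $t$ acts as $+1$ on each half in the given ranges, and the half-preserving/half-swapping dichotomy of Corollary~\ref{disjoi}) into the statement that $\sigma t\sigma^{-1}$ and $\tau t\tau^{-1}$ are the two distinct elements $t$ and $t^{-1}$, reducing the matter to $t^{2}(m)\neq m$.
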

\begin{proof}
Suppose $\sigma(p) = \tau(q)$, i.e. $q=\tau^{-1}\sigma(p)$. We need to show
that  $\tau^{-1}\sigma(p+1) \neq q+1$. Because of Lemma~\ref{other} and the restrictions on $p$ and $q$,
$\tau^{-1} \sigma \notin \langle t \rangle $. Hence, $\tau^{-1} \sigma = t^l u$ for some
integer $l$. Then, because $p\neq \frac{n}{2}$, $\tau^{-1}\sigma(p+1)=t^l u (p+1)=t^l u t (p)= t^{l-1}u(p)=
t^{-1}t^lu(p)=t^{-1}(q)\neq q+1$ because $q\neq n$.
\end{proof}

\begin{lemma}\label{maxOne}
Suppose $1 \leq i < \frac{n}{2}-1$ and let $j$ be an integer such that $i\leq j\leq n$. If $\sigma
,\tau \in H$ are such that
$$ x_{\sigma(n-j+i)} x_{\sigma(n-j+i+1)} \cdots x_{\sigma(n)}= x_{\tau(i)}  x_{\tau(i+1)} \cdots x_{\tau(j)},$$
then either $j=i$, or $j=n$ and $\sigma=\tau$.
\end{lemma}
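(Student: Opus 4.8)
The plan is to reduce the word identity to a single statement about a permutation and then exploit the cycle structure of $t$ together with the relation $ut=t^{-1}u$. Both sides of the displayed equation are words of the same length $j-i+1$ in the free monoid $\FM_n$, so the identity holds if and only if the letters agree position by position, i.e. $\sigma(n-j+m)=\tau(m)$ for every $m$ with $i\le m\le j$. Writing $\rho=\tau^{-1}\sigma\in H$ and $c=n-j\ge 0$, this is equivalent to $\rho(s)=s-c$ for all $s$ in the interval $I=\{c+i,\dots,n\}$, which contains exactly $j-i+1$ integers.

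First I would dispose of the two extreme cases. If $c=0$ (that is, $j=n$), then $\rho$ fixes every point of $I=\{i,\dots,n\}$, and since $i<\frac{n}{2}-1$ this is at least two points; by Lemma~\ref{stabilizerr} a nonidentity element of $H$ has no fixed point, so $\rho=\id$ and $\sigma=\tau$, the second alternative. If $j=i$ the interval $I$ is a single point and there is nothing to prove, the first alternative. So the entire content is to rule out the remaining case $c\ge 1$ and $j>i$, in which $I$ has at least two elements.

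In that case I would use Corollary~\ref{disjoi} together with Lemma~\ref{other} to observe that each element of $H$ either preserves the two halves $\{1,\dots,\frac{n}{2}\}$, $\{\frac{n}{2}+1,\dots,n\}$, in which case it lies in $\langle t\rangle$, or interchanges them, in which case it lies in $\langle t\rangle u$; this splits the argument into $\rho=t^l$ versus $\rho=t^lu$. Since $j>i$, the two smallest points $s_0=c+i$ and $s_0+1$ both lie in $I$, with $\rho(s_0)=i$ and $\rho(s_0+1)=i+1=t(i)$, where $i,i+1$ are consecutive points of the first half (here $i<\frac{n}{2}-1$ is used). If $\rho=t^l$ preserves the halves, then $\rho(s_0)=i$ forces $s_0\le\frac{n}{2}$, whence $\frac{n}{2}+1\in I$; but $\rho(\frac{n}{2}+1)=\frac{n}{2}+1-c\le\frac{n}{2}$ lands in the first half although $\frac{n}{2}+1$ lies in the second, contradicting half-preservation. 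If $\rho=t^lu$ interchanges the halves, then $s_0$ must lie in the second half (its image $i$ lies in the first), so $s_0\ne\frac{n}{2}$ and $s_0+1=t(s_0)$; comparing $\rho(s_0+1)=t^lut(s_0)=t^{l-1}u(s_0)$, obtained from $ut=t^{-1}u$, with $\rho(s_0+1)=t(\rho(s_0))=t^{l+1}u(s_0)$ yields $t^2u(s_0)=u(s_0)$. This is impossible, because $t^2\ne\id$ for $k\ge 2$ has no fixed point by Lemma~\ref{stabilizerr}.

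The main obstacle is the bookkeeping at the boundary between the two $t$-cycles: the assignment $s\mapsto s-c$ is an honest downward shift on $I$, whereas the only near-shifts available inside $H$ are powers of $t$, which wrap around at $\frac{n}{2}$ and at $n$, so one must keep careful track of where the endpoints and the chosen consecutive pair sit relative to the two halves. Two observations make this manageable: in the half-swapping case $I$ is forced entirely into the second half, so the awkward value $s_0=\frac{n}{2}$ never occurs, and in the half-preserving case a single second-half point mapping into the first half already produces the contradiction.
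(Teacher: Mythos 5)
Your proof is correct, and it reaches the conclusion by a somewhat different decomposition than the paper's. The paper first handles $j=n$ via the last letters, then splits on the size of the overlap: for $j\geq \frac{n}{2}+i$ it compares the two positions of the left word whose arguments are $\frac{n}{2}$ and $\frac{n}{2}+1$, deduces $\sigma^{-1}\tau\in\langle t\rangle$ from Lemma~\ref{other} and derives $j\geq n$; for $i<j<\frac{n}{2}+i$ it simply applies Lemma~\ref{notPossible} to the first two letters. You instead translate the word identity into the single statement $\rho(s)=s-c$ on the interval $I$ for $\rho=\tau^{-1}\sigma$ and split on whether $\rho$ preserves or swaps the two halves: the half-preserving case is killed by the point $\frac{n}{2}+1\in I$, and the half-swapping case by the computation with $ut=t^{-1}u$ showing $t^2$ would have a fixed point. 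That second computation is essentially the proof of Lemma~\ref{notPossible} inlined, so the toolbox is identical (Corollary~\ref{disjoi}, the relation $ut=t^{-1}u$, and regularity via Lemma~\ref{stabilizerr}); what your organization buys is a uniform treatment of all overlap lengths $j>i$, $j<n$ without the separate long-overlap subcase, at the cost of not reusing Lemma~\ref{notPossible} as a black box. All the boundary checks you flag (that $i,i+1$ lie strictly inside the first half because $i<\frac{n}{2}-1$, that $s_0\neq\frac{n}{2},n$ in the swapping case, and that $t^2\neq\id$ because $k\geq 2$) are exactly the points where the hypotheses are needed, and you verify them correctly.
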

\begin{proof}
If $j= n$, then the equality of the words yields $\sigma(n)= \tau(n)$ and thus,
by Lemma~\ref{stabilizerr}, $\sigma =\tau$.
So suppose $j\neq n$. First we deal with the case that
$j\geq \frac{n}{2}+i$. Then $\sigma(\frac{n}{2})=\tau(j-\frac{n}{2})$ and $\sigma(\frac{n}{2}+1)=\tau(j-\frac{n}{2}+1)$.
Hence $\sigma^{-1}\tau(j-\frac{n}{2})= \frac{n}{2}$ and $\frac{n}{2}<j \leq n$.
So, by Lemma \ref{other}, $\sigma^{-1}\tau \in \langle t \rangle $. However,
$\sigma^{-1}\tau (j-\frac{n}{2}+1)= \frac{n}{2}+1 $
then implies that $j-\frac{n}{2} + 1 \geq \frac{n}{2} + 1$.
Thus $j \geq n$, a contradiction if $j\neq n$.

So, we have shown that if $j \neq n$, then  $j< \frac{n}2 +i$, and
then $\frac{n}2 < n-j+i <n$.  Suppose that $j\neq n$ and $i\neq
j$. Now, the given equality of words implies
$$ x_{\sigma(n-j+i)} x_{\sigma(n-j+i+1)}= x_{\tau(i)}x_{\tau(i+1)}.$$
Since $\frac{n}2 < n-j +i <n$ and $1\leq i \leq \frac{n}2 -1$
this yields a contradiction with Lemma \ref{notPossible}.
\end{proof}

\begin{lemma}\label{big}
Let $i,j$ be  integers such that $1 \leq i,j \leq \frac{n}2$. If $\sigma
,\tau \in H$ are such that $$ x_{\sigma(j)} x_{\sigma(j+1)} \cdots
x_{\sigma(j+n/2)}= x_{\tau(i)}  x_{\tau(i+1)} \cdots
x_{\tau(i+n/2)},$$ then  $j=i$ and $\sigma=\tau$.
\end{lemma}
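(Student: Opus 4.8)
The plan is to prove Lemma~\ref{big} by a case analysis on $i$ and $j$, exploiting the fact that the two words have length $\frac{n}{2}+1$, which is large enough that we can extract information about how $\sigma^{-1}\tau$ acts and then force it to be the identity. The key observation is that since $1\leq i,j\leq \frac{n}{2}$, the index $i+n/2$ satisfies $\frac{n}{2}<i+n/2\leq n$, so the factor $x_{\tau(i+n/2)}$ on the right lies in the ``second half'' of the word $a_{\tau(1)}\cdots a_{\tau(n)}$, while $x_{\tau(i)}$ lies in the first half (strictly, when $i<\frac{n}{2}$); similarly on the left. This mismatch between halves is exactly the situation that Lemma~\ref{notPossible} and Corollary~\ref{disjoi} are designed to exploit, so the strategy is to produce a two-letter overlap straddling the midpoint and derive a contradiction unless $\sigma^{-1}\tau\in\langle t\rangle$, and then to pin down the shift.

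First I would reduce to comparing the action of the single permutation $\rho=\sigma^{-1}\tau$. From the given word equality, matching letters position by position gives $\sigma(j+r)=\tau(i+r)$ for $0\leq r\leq n/2$, i.e. $\rho(i+r)=\sigma^{-1}\sigma(j+r)$, equivalently $\sigma(j+r)=\tau(i+r)$ for all $r$ in this range. I would then consider the indices where the argument crosses from the first half $\{1,\dots,n/2\}$ to the second half $\{n/2+1,\dots,n\}$. Since $j\leq \frac{n}{2}\leq j+n/2$, there is an index $r_0$ with $j+r_0=\frac{n}{2}$ (or the boundary is reached), and at the consecutive pair straddling the midpoint I can invoke Corollary~\ref{disjoi}: the image set $\{\sigma(j),\dots,\sigma(j+n/2)\}$ must respect the half-decomposition, and likewise for $\tau$, which constrains how many letters fall in each half on each side. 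Comparing these cardinalities forces $i=j$: if $i\neq j$ the two words would place different numbers of letters in the first versus second half, contradicting that they are equal words (equal multisets of letters in each position).

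Once $i=j$ is established, the equality becomes $x_{\sigma(i+r)}=x_{\tau(i+r)}$ for $0\leq r\leq n/2$, so $\sigma$ and $\tau$ agree on the $\frac{n}{2}+1$ consecutive arguments $i,i+1,\dots,i+n/2$. In particular they agree on at least one argument, say $\sigma(i)=\tau(i)$, whence $\sigma^{-1}\tau(i)=i$; since $\sigma^{-1}\tau\in H$ and $H$ is regular, Lemma~\ref{stabilizerr} gives $\sigma^{-1}\tau=\id$, i.e. $\sigma=\tau$, as desired.

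The main obstacle I expect is the bookkeeping in the first step: the midpoint-straddling argument must be handled carefully at the two boundary cases $i=\frac{n}{2}$ and $j=\frac{n}{2}$ (where one of the putative ``first-half'' letters actually sits on the boundary), and I must verify that the hypotheses of Lemma~\ref{notPossible} (namely $1\leq p\leq \frac{n}{2}-1$ and $\frac{n}{2}<q\leq n-1$) are genuinely met by the pair of consecutive indices I select, rather than falling just outside the allowed range. Getting the reduction to $i=j$ cleanly—so that the final appeal to regularity via Lemma~\ref{stabilizerr} applies—is where the care is needed; the rest is routine.
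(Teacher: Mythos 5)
Your overall strategy is close to the paper's, and your final step is exactly theirs: once $i=j$, the first letters give $\sigma(j)=\tau(j)$, so $\tau^{-1}\sigma$ fixes $j$ and regularity (Lemma~\ref{stabilizerr}) forces $\sigma=\tau$. The gap is in the middle step, where you claim that ``comparing these cardinalities forces $i=j$.'' By Corollary~\ref{disjoi} each of $\sigma,\tau$ either preserves the two halves $\{1,\dots,n/2\}$ and $\{n/2+1,\dots,n\}$ or swaps them, so the number of letters of the left-hand word whose index lies in the first half is $n/2-j+1$ or $j$ according to the type of $\sigma$, and for the right-hand word it is $n/2-i+1$ or $i$. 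These counts can agree with $i\neq j$: if $\sigma$ preserves the halves, $\tau$ swaps them, and $i+j=n/2+1$ (e.g.\ $n=8$, $j=4$, $i=1$), the cardinalities match. So counting alone does not prove $i=j$; the reason you give for the key step fails in this case.

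The repair is immediate from the position-by-position equality you already wrote down: the first letters satisfy $\sigma(j)=\tau(i)$ with both arguments $j,i$ in the first half, so $\sigma$ and $\tau$ must be of the same type (both preserve or both swap the halves); then the position at which the letter indices change halves is $n/2-j+2$ on the left and $n/2-i+2$ on the right, forcing $i=j$. The paper's own route is a streamlined version of this: from $\tau^{-1}\sigma(j)=i$ with $i,j\le n/2$ it deduces $\tau^{-1}\sigma\in\langle t\rangle$ via Lemma~\ref{other}, hence $\tau^{-1}\sigma(n/2+1)\ge n/2+1$, while the word equality gives $\tau^{-1}\sigma(n/2+1)=n/2+1-j+i\le n/2$ when $i<j$ --- a contradiction evaluated at the two points $j$ and $n/2+1$ rather than by bookkeeping over the whole word. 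Note also that your planned appeal to Lemma~\ref{notPossible} and a ``two-letter overlap straddling the midpoint'' is not what your argument actually carries out, and is not needed.
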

\begin{proof} Suppose that $i\neq j$. We may assume that $i<j$.
Since $\tau^{-1}\sigma(j)=i$ and because $1 \leq i,j \leq \frac{n}2$,
by Lemma~\ref{other}, $\tau^{-1}\sigma \in \langle t \rangle$.
Hence $\frac{n}2 +1 \leq \tau^{-1}\sigma(\frac{n}2 +1) \leq n$.
However, the given equality of words implies that
$\tau^{-1}\sigma(\frac{n}2+1)=\frac{n}2+1-j+i\leq \frac{n}2$, a contradiction.
Hence $i=j$ and, by Lemma~\ref{stabilizerr}, $\sigma=\tau$.
\end{proof}

One of the important interpretations of Lemma~\ref{maxOne} concerns the rewriting of an element
$$  a_{m_1}\cdots a_{m_r}
\overbracket{
 a_{i_1} \cdots   
\makebox[0pt][l]{$\displaystyle{\underbracket{\phantom{
a_{i_{n-j+1}} \cdots a_{i_n} a_{l_1} \cdots a_{l_{n-j}} 
} }_{\text{   } } } $}   
a_{i_{n-j+1}} \cdots a_{i_n}                                                 
}^{\text{   }}    
a_{l_1} \cdots a_{l_{n-j}}
\cdots a_{l_s}$$   
of $S_n$ using a relation (as indicated)
\begin{equation*}
\begin{array}{r@{}l}
x_{i_1} \cdots x_{i_n} &{} = x_{\sigma(1)} \cdots x_{\sigma(n)} \vspace{3 pt} \\

x_{i_{n-j+1}} \cdots x_{i_n}x_{l_1} \cdots x_{l_{n-j}}   &{} = x_{\tau(1)} \cdots x_{\tau(n)} .
\end{array}
\end{equation*}
Lemma~\ref{maxOne} says that the number $j$ of overlapping letters of the first word with the second word is at most $1$.

Let $\pi: \FM_n \to S_n$ denote the unique morphism such that $\pi(x_i)=
a_i$ for all $i= 1, \ldots , n$.

\begin{lemma}\label{stepss}
Let $w_1, w_2 \in \FM_n$ be such that $\pi(w_1) = \pi(w_2)$,
$$ w_1= x_{i_1} \cdots x_{i_r} , \quad  w_2= x_{j_1} \cdots x_{j_r} ,
\quad\mbox{and} \quad  i_1 \neq j_1.$$
Then $r\geq n$ and there exist $\sigma, \tau \in H$ such that
$$w_1 = x_{\sigma(1)}\cdots x_{\sigma(n-1)} x_{i_n} \cdots x_{i_r} \quad \mbox{and} \quad
w_2 = x_{\tau(1)}\cdots x_{\tau(n-1)} x_{j_n} \cdots x_{j_r}.$$
Furthermore, exactly one of the following conditions holds:
\vspace{6 pt}
\begin{itemize}
\item[(1)] $\sigma(n) = i_n$ and $ \tau(n) = j_n$,
\item[(2)] $\sigma(n) = i_n$ and $ \tau(n) \neq j_n$,
\item[(3)] $\sigma(n) \neq i_n$ and $ \tau(n) = j_n$.
\end{itemize}
\end{lemma}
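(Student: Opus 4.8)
The plan is to realise $S_n$ as $\FM_n$ modulo the congruence $\sim$ generated by the defining relations, all of whose two sides are words of length $n$. Since $\pi(w_1)=\pi(w_2)$, there is a finite rewriting chain $w_1=v_0,v_1,\dots ,v_m=w_2$ in which each $v_{t+1}$ is obtained from $v_t$ by replacing a length-$n$ factor equal to an $H$-word $x_{\rho(1)}\cdots x_{\rho(n)}$ by another $H$-word $x_{\rho'(1)}\cdots x_{\rho'(n)}$. Each such move preserves the length, so every word $\sim$-equivalent to $w_1$ has length $r$; if $r<n$ no length-$n$ factor exists, no move ever applies, and the $\sim$-class of $w_1$ is $\{w_1\}$, forcing $w_1=w_2$ against $i_1\neq j_1$. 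Hence $r\geq n$ and the length-$n$ prefixes in the statement are defined.

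The first letter of a word can change only under a move acting at position $1$, since a move at a position $p\geq 2$ fixes positions $1,\dots ,p-1$. As $i_1\neq j_1$ such a move occurs, and I would argue by induction on the length $m$ of a shortest chain. If $m=1$ the single move must act at position $1$, so the length-$n$ prefixes of $w_1$ and $w_2$ are $H$-words $x_{\sigma(1)}\cdots x_{\sigma(n)}$ and $x_{\tau(1)}\cdots x_{\tau(n)}$ over a common tail, whence $\sigma(n)=i_n$, $\tau(n)=j_n$, i.e.\ case~(1). For $m\geq 2$ I would inspect the first move $v_0\to v_1$, acting at some position $p$. When $p=1$, the length-$n$ prefix of $w_1$ is a full $H$-word, so $\sigma(n)=i_n$ and the prefix claim for $w_1$ is immediate; the pair $(v_1,w_2)$ is joined by a shorter chain, and applying the induction hypothesis to it (using Proposition~\ref{adjannn} to left-cancel a common first letter in the degenerate case where $v_1$ and $w_2$ already agree in their first letter) yields $\tau$ and the status of $w_2$. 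When $p\geq n$ the prefix region $\{1,\dots ,n-1\}$ is untouched, so the conclusion for $w_1$ transfers verbatim from the conclusion for $v_1$ supplied by induction.

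The essential case is $2\leq p\leq n-1$, where the first move disturbs the tail $\{p,\dots ,n-1\}$ of the prefix. Here I would use that in $v_1$ (by induction) positions $1,\dots ,n-1$ form the beginning of an $H$-word, while the move placed a full $H$-word at positions $p,\dots ,p+n-1$; the two $H$-words share the block $\{p,\dots ,n-1\}$, and Lemma~\ref{maxOne} forces any suffix--prefix overlap of two $H$-words to be at most one letter (with Lemma~\ref{notPossible}, Lemma~\ref{big} and Corollary~\ref{disjoi} handling overlaps that reach the middle). This rigidity is exactly what lets one recombine the restored block with the shared initial segment into a single $H$-word prefix $x_{\sigma(1)}\cdots x_{\sigma(n-1)}$ for $w_1$. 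The trichotomy is then read off: cases (1)--(3) are the three combinations of the truth values of $\sigma(n)=i_n$ and $\tau(n)=j_n$ other than ``both false'', and the remaining combination is excluded because a position-$1$ move must occur somewhere in the chain and can only be enabled by a full $H$-word prefix at one of the two ends; Lemma~\ref{stabilizerr} is used throughout to conclude $\sigma=\tau$ from agreement in enough coordinates.

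I expect the step $2\leq p\leq n-1$ to be the main obstacle: a move far from the front may, through a cascade of overlaps, still reach and alter the length-$(n-1)$ prefix, and one must guarantee that each alteration leaves it a genuine $H$-word prefix. This is precisely where the bound ``overlap at most one letter'' of Lemma~\ref{maxOne} is indispensable, since it prevents two distinct relations from sharing a long factor and so keeps the propagation of changes toward position~$1$ under sufficiently rigid control to pin down $\sigma$ and $\tau$.
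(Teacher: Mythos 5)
Your treatment of the first claim ($r\geq n$ and the length-$(n-1)$ $H$-word prefixes) is in the same spirit as the paper's --- both rest on the overlap rigidity coming from Lemma~\ref{maxOne} --- but the way you resolve your ``essential case'' $2\leq p\leq n-1$ rests on a misconception. By Lemma~\ref{stabilizerr} an $H$-word prefix of length $n-1$ is determined by its first letter, and a move applied at position $p\in\{2,\dots,n-1\}$ necessarily changes the letter at position $p$ (again by Lemma~\ref{stabilizerr}) while fixing position $1$; hence it can never carry one genuine $H$-word prefix to another, and there is nothing to ``recombine''. What must be shown is that such a move cannot occur at all adjacent to an $H$-word prefix: for $2\leq p\leq n-2$ the overlap of length $n-p\geq 2$ is killed by Lemma~\ref{notPossible} together with Corollary~\ref{disjoi}, and the residual case $p=n-1$ (overlap of length one, which none of the lemmas excludes outright) still needs a separate argument. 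A proof that expects the prefix to be ``altered yet remain a genuine $H$-word prefix'' would run into a contradiction instead of closing the induction.

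The more serious gap is the trichotomy. Your justification --- that a position-$1$ move ``can only be enabled by a full $H$-word prefix at one of the two ends'' --- is false as stated: the position-$1$ move is enabled by a full $H$-word prefix of some \emph{intermediate} word of the chain, and intermediate rewrites at position $n$ may have changed the $n$-th letter by then; this is precisely why cases (2) and (3) of the statement occur at all. The actual content of the ``furthermore'' part is that $\sigma(n)\neq i_n$ and $\tau(n)\neq j_n$ cannot hold simultaneously, and the paper proves this by a genuine argument: it applies the first part of the lemma recursively to the tails $x_{i_n}\cdots x_{i_r}$ and $x_{k_n}\cdots x_{k_r}$ (with $k_n=\sigma(n)$), constructs the auxiliary word $w_1'$, uses cancellativity (Proposition~\ref{adjannn}) to strip the common prefix of length $k$ resp.\ $n-1$, and then applies Lemma~\ref{stabilizerr} twice to force $\tau=\sigma$, contradicting $i_1\neq j_1$. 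Your proposal offers no substitute for this step, so the exclusion of the fourth combination is unproved.
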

\begin{proof}
Because the defining relations are homogeneous of degree $n$ and $i_1 \neq j_1$, it is clear that $r \geq n$.
Furthermore, both words $w_1$ and $w_2$ have to be rewritten using the defining relations. As explained above,
the overlap in such rewrites is of length at most one. It follows that
$w_1 = x_{\sigma(1)}\cdots x_{\sigma(n-1)} x_{i_n} \cdots x_{i_r}$ and
$w_2 = x_{\tau(1)}\cdots x_{\tau(n-1)} x_{j_n} \cdots x_{j_r}$ for some $\sigma, \tau \in H$.
This proves the first part.

To prove the second part, suppose that $\sigma(n) \neq i_n$ and $\tau(n) \neq j_n$.
Because $\pi(w_1)=\pi(w_2)$ and because of the type of defining relations we then get that
there exist
$k_n, k_{n+1}, \ldots, k_r \in \{1,2,\ldots,n\}$ such that
$\pi(x_{i_n} \cdots x_{i_r}) = \pi(x_{k_n} \cdots x_{k_r})$
and $k_n= \sigma(n)$. By the first part of the lemma, $r-n+1 \geq n$ and there exist $\gamma, \delta \in H$ such that
$$ x_{i_n} \cdots x_{i_r} = x_{\gamma(1)} \cdots x_{\gamma(n-1)} x_{i_{2n-1}} \cdots x_{i_r}$$
and
 $$x_{k_n} \cdots x_{k_r} = x_{\delta(1)} \cdots x_{\delta(n-1)} x_{k_{2n-1}} \cdots x_{k_r} .$$
Let $w_1'= x_{\sigma(1)}\cdots x_{\sigma(n-1)} x_{\delta(1)} \cdots x_{\delta(n-1)} x_{k_{2n-1}} \cdots x_{k_r}$.
We have that
$$ \pi(w_2) = \pi(w_1)= \pi(w_1')= \pi (x_{\tau(1)}\cdots x_{\tau(n-1)} x_{\tau \sigma^{-1}\delta(1)} x_{\delta(2)} \cdots x_{\delta(n-1)} x_{k_{2n-1}} \cdots x_{k_r}).$$
By Proposition~\ref{adjannn}, $S_n$ is cancellative, hence it follows that
$$ \pi(x_{\tau \sigma^{-1}\delta(1)} x_{\delta(2)} \cdots x_{\delta(n-1)} x_{k_{2n-1}} \cdots x_{k_r})
= \pi(x_{j_n} \cdots x_{j_r}).$$
Since $\tau \sigma^{-1} \delta (1)= \tau \sigma^{-1}(k_n) = \tau(n)\neq j_n$, by the first part of the lemma there exist
$\zeta, \eta \in H$ such that
\begin{eqnarray*}
 \left( \tau \sigma^{-1} \delta(1), \delta(2), \ldots , \delta(n-1)\right) &=&\left( \zeta(1), \ldots, \zeta(n-1)\right)  \quad \mbox{and} \\
 \left(j_n, j_{n+1}, \ldots , j_{2n-2}\right) &=& \left(\eta (1), \ldots , \eta(n-1)\right).
\end{eqnarray*}
Therefore, by Lemma \ref{stabilizerr}, $\tau \sigma^{-1}\delta = \zeta$ and $\delta = \zeta$.
Hence $\tau = \sigma$. But then $i_1= \sigma(1) = \tau(1) = j_1$, a contradiction.
Thus the result follows.
\end{proof}

\begin{lemma}\label{step3}
Suppose $1\leq i < n$, $\tau\in H$ and $w_1, w_2 \in \FM_n$. If
$$ \pi(w_1)= \pi(x_{\tau(i+1)} \cdots x_{\tau(n)}w_2),$$
then either $w_1 = x_{\tau(i+1)} \cdots x_{\tau(n)}w_2'$, or there
exists $\gamma \in H$ such that  $w_1 = x_{\tau(i+1)} \dots
x_{\tau(n-1)}x_{\gamma(1)} \dots x_{\gamma(n-1)}w_2''$, for some
$w_2', w_2'' \in \FM_n$.
\end{lemma}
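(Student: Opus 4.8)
The plan is to compare $w_1$ with the word $v:=x_{\tau(i+1)}\cdots x_{\tau(n)}w_2$ letter by letter. Since the defining relations are homogeneous of degree $n$, the morphism $\pi$ preserves length, so $w_1$ and $v$ have the same length. If $w_1=v$ as words, the first alternative holds with $w_2'=w_2$; hence I may assume $w_1\neq v$ and let $p$ be the first position at which $w_1$ and $v$ disagree. Writing $P'=x_{\tau(i+1)}\cdots x_{\tau(n-1)}$ for the prefix of length $n-i-1$, the proof reduces to one key claim, namely that the first disagreement cannot occur strictly inside $P'$, i.e. $p\geq n-i$; once this is known, the two alternatives of the statement correspond exactly to $p>n-i$ and to $p=n-i$.

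The heart of the argument, and the step I expect to be the main obstacle, is this key claim. Suppose for contradiction that $p\leq n-i-1$. Cancelling the common prefix of length $p-1$ — legitimate since $S_n$ is cancellative by Proposition~\ref{adjannn} — leaves two words with the same image under $\pi$ but different first letters, the suffix of $v$ being $x_{\tau(i+p)}x_{\tau(i+p+1)}\cdots x_{\tau(n)}w_2$. By Lemma~\ref{stepss}, each of these suffixes begins with the first $n-1$ letters of a relation word, so there is $\rho\in H$ with $\rho(l)=\tau(i+p-1+l)$ for $l=1,\dots,n-i-p+1$ (note $n-i-p+1\leq n-1$ because $i+p\geq 2$). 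This yields the word identity
\[ x_{\tau(i+p)}x_{\tau(i+p+1)}\cdots x_{\tau(n)} = x_{\rho(1)}x_{\rho(2)}\cdots x_{\rho(n-i-p+1)}. \]
I would now invoke Lemma~\ref{maxOne} with its parameters $i$, $j$, $\sigma$, $\tau$ set respectively to $1$, $n-i-p+1$, $\tau$, $\rho$; the requirement $1<\tfrac{n}{2}-1$ holds because $k\geq 2$. Its conclusion forces either $n-i-p+1=1$ or $n-i-p+1=n$, i.e. $i+p=n$ or $i+p=1$, both impossible under $1\leq i$ and $1\leq p\leq n-i-1$. This contradiction establishes $p\geq n-i$.

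Granting the key claim, the remaining step is routine. If $p>n-i$, or if $w_1$ and $v$ agree throughout the prefix $x_{\tau(i+1)}\cdots x_{\tau(n)}$, then $w_1=x_{\tau(i+1)}\cdots x_{\tau(n)}w_2'$ for the remaining suffix $w_2'$, giving the first alternative. If instead $p=n-i$, then $w_1$ and $v$ agree on all of $P'$ but differ at the next letter; cancelling $P'$ and applying Lemma~\ref{stepss} once more to the resulting suffixes, whose first letters differ, produces $\gamma\in H$ with $w_1=x_{\tau(i+1)}\cdots x_{\tau(n-1)}x_{\gamma(1)}\cdots x_{\gamma(n-1)}w_2''$, which is the second alternative. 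The two cases are mutually exclusive since they are distinguished by the value of $p$, so exactly one of the stated alternatives holds.
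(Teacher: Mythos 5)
Your proof is correct and follows essentially the same route as the paper: assume the first disagreement occurs strictly inside $x_{\tau(i+1)}\cdots x_{\tau(n-1)}$, cancel the common prefix, apply Lemma~\ref{stepss} to force a relation-word prefix on the suffix of $x_{\tau(i+1)}\cdots x_{\tau(n)}w_2$, and derive the overlap equality $x_{\tau(i+p)}\cdots x_{\tau(n)}=x_{\rho(1)}\cdots x_{\rho(n-i-p+1)}$ that contradicts Lemma~\ref{maxOne}. The concluding case split via cancellativity and one more application of Lemma~\ref{stepss} also matches the paper's final step.
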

\begin{proof}
Let $w_1 = x_{i_1} \cdots x_{i_r}$. Suppose that there exists an integer $s$ such that
$1 \leq s \leq n-i-1$ and
$$ i_1 = \tau(i+1), \  i_2=\tau(i+2), \ \ldots , \ i_{s-1}=\tau(i+s-1) \quad
\mbox{and} \quad i_s \neq \tau(i+s).$$
Then because $S_n$ is cancellative by Proposition~\ref{adjannn}, $\pi(x_{i_s} \cdots x_{i_r})=\pi(x_{\tau(i+s)} \cdots x_{\tau(n)}w_2)$.
So by Lemma \ref{stepss}, $r-s+1 \geq n$ and there exist $\gamma, \delta \in H$ and $w\in \FM_n$ such that
$$ x_{i_s} \cdots x_{i_r} = x_{\gamma(1)} \cdots x_{\gamma(n-1)}x_{i_{s+n-1}}\cdots x_{i_r}
\quad \mbox{and} \quad x_{\tau(i+s)} \cdots x_{\tau(n)}w_2 =
x_{\delta(1)} \cdots x_{\delta(n-1)}w.$$ Since $2\leq i+s < n$ and
$x_{\tau(i+s)} \cdots x_{\tau(n)}=x_{\delta(1)} \cdots
x_{\delta(n-i-s+1)}$, we get a contradiction with
Lemma~\ref{maxOne}. Therefore $$ i_1 = \tau(i+1), \ i_2=\tau(i+2), \
\ldots , \ i_{n-i-1}=\tau(n-1).$$ Now the result follows by the
cancellativity of $S_n$ and Lemma \ref{stepss}.
\end{proof}

 \begin{lemma}\label{overlapp}
Suppose $i,j,l, m$ are integers such that $1\leq j\leq l<m\leq n$, $1\leq i\leq 2$ and $n-1\leq m$. Let
 $\sigma,\tau,\lambda\in H$ be such that $\sigma\neq \tau$. If
$$x_{\sigma(j)} \cdots x_{\sigma(l)}x_{\tau(l+1)} \cdots x_{\tau(m)}=x_{\lambda(i)}\cdots
x_{\lambda(m-j+i)},$$ then $j=l$ and $l+1=m$.
\end{lemma}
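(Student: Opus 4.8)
The plan is to convert the single word equation into two arithmetic statements about how the elements $\lambda^{-1}\sigma$ and $\lambda^{-1}\tau$ permute indices, and then to exploit the rigidity of $H$ coming from Corollary~\ref{disjoi} (every element either preserves or interchanges the halves $\{1,\dots,n/2\}$ and $\{n/2+1,\dots,n\}$). First I would compare the two words letter by letter. Setting $c=i-j$, the prefix of length $l-j+1$ gives $\lambda^{-1}\sigma(q)=q+c$ for $j\le q\le l$, and the suffix gives $\lambda^{-1}\tau(q)=q+c$ for $l+1\le q\le m$. Thus, writing $\alpha=\lambda^{-1}\sigma$ and $\beta=\lambda^{-1}\tau$, the two elements act as the \emph{same} translation $q\mapsto q+c$ on the adjacent index intervals $\{j,\dots,l\}$ and $\{l+1,\dots,m\}$, and $\alpha\ne\beta$ because $\sigma\ne\tau$. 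Both intervals are non-empty (as $j\le l<m$), so the case $c=0$ is disposed of immediately: then $\alpha$ and $\beta$ each have a fixed point, whence $\alpha=\beta=\id$ by Lemma~\ref{stabilizerr} and $\sigma=\lambda=\tau$, a contradiction. Hence $c\ne0$ from now on.

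The core of the argument is a rigidity statement: \emph{if $\rho\in H$ satisfies $\rho(q)=q+c$ with $c\ne0$ on an interval of length at least $2$, then that interval lies inside one half and $\rho=t^{\,c\bmod 2k}$.} I would prove this in two steps. If the interval contained both $2k$ and $2k+1$, then $\rho(2k)=2k+c$ and $\rho(2k+1)=2k+1+c$ lie in opposite halves or the same half according to Corollary~\ref{disjoi}; the half-preserving option forces $c\le0$ and $c\ge0$ at once, and the half-interchanging option is impossible, so $c=0$, contrary to assumption. Thus the interval stays in one half, where $q\mapsto q+c$ is increasing. Using the defining relation $u^{-1}tu=t^{-1}$, a half-interchanging element $\rho=t^su$ satisfies $\rho t=t^{-1}\rho$ and therefore reverses orientation on each half, so it cannot agree with an increasing map on two consecutive indices; hence $\rho\in\langle t\rangle$, and matching the shift pins it down as $t^{\,c\bmod 2k}$. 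An immediate consequence is that \emph{at most one} of the two intervals can have length $\ge2$: otherwise $\alpha$ and $\beta$ would both equal $t^{\,c\bmod 2k}$, contradicting $\alpha\ne\beta$.

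It then remains to exclude the two situations in which exactly one interval is long, and here the hypotheses $i\le2$ and $m\ge n-1$ (with $k\ge2$) do the work. If $\{j,\dots,l\}$ is the long one, then $\alpha=t^{\,c\bmod 2k}$ is half-preserving and $\alpha(j)=i\le2$ lies in the first half, forcing $\{j,\dots,l\}$ into the first half and $l\le n/2$; but then $m=l+1\le n/2+1$ contradicts $m\ge n-1=2\cdot\tfrac n2-1$ since $n/2=2k\ge4$. If instead $\{l+1,\dots,m\}$ is the long one, then $j=l$; this interval cannot contain $2k,2k+1$ (that would give $c=0$ by the rigidity step), and since $m\ge n-1$ lies in the second half, the whole interval lies in the second half, so $l+1>n/2$. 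Then the first two equal letters of the suffix, $x_{\tau(l+1)}x_{\tau(l+2)}=x_{\lambda(i+1)}x_{\lambda(i+2)}$, pit a second-half consecutive pair (positions $l+1,l+2$) against a first-half consecutive pair (positions $i+1,i+2$, with $i+1\le3\le n/2-1$), which is forbidden by Lemma~\ref{notPossible}. With both cases excluded, both intervals are singletons, i.e. $j=l$ and $l+1=m$.

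I expect the main obstacle to be the rigidity statement together with the bookkeeping at the midpoint between $2k$ and $2k+1$ and at the two boundaries of the word: one must track which half each of $j$, $l+1$, $m$ and the images $i$, $m-j+i$ occupies, and it is precisely the numerical slack furnished by $i\le2$, $m\ge n-1$, and $k\ge2$ that closes the two one-long-interval cases. The orientation-reversal of the half-interchanging elements, read off from $u^{-1}tu=t^{-1}$, is the key structural input that makes a non-trivial translation on a long interval impossible except in the tightly constrained half-preserving form.
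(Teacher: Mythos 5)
Your proposal is correct, and it reorganizes the argument around a different pivot than the paper does. You recast the word equation as the statement that $\alpha=\lambda^{-1}\sigma$ and $\beta=\lambda^{-1}\tau$ both act as the same translation $q\mapsto q+c$ on the adjacent intervals $\{j,\dots,l\}$ and $\{l+1,\dots,m\}$, prove a rigidity lemma (a nonzero translation on two consecutive indices forces the interval into one half and pins the element down as $t^{\,c\bmod 2k}$), and then dispose of the ``both intervals long'' case by \emph{uniqueness}: $\alpha=\beta=t^{\,c\bmod 2k}$ would give $\sigma=\tau$. The paper instead proves $j=l$ and then $l+1=m$ by two separate contradiction arguments, each splitting into cases according to the position of $l$ (resp.\ $j$) relative to $\frac n2$ and invoking Lemma~\ref{other}, Corollary~\ref{disjoi}, Lemma~\ref{notPossible} and Lemma~\ref{stabilizerr} case by case; it never isolates the uniqueness observation. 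Your rigidity step essentially re-derives, from $u^{-1}tu=t^{-1}$, the orientation-reversal fact that the paper packages once and for all in Lemma~\ref{notPossible} (when you rule out a half-interchanging $\rho$ you should note, as the paper does via the condition $q\neq n$, that the wrap-around case $\rho(q+1)=\rho(q)+2k-1$ is also excluded since $2k-1\neq 1$ for $k\geq 2$ --- a one-line remark, not a gap). The two remaining asymmetric cases are handled exactly as the hypotheses intend: $i\leq 2$ and $m\geq n-1$ with $k\geq 2$ close the case where the left interval is long, and Lemma~\ref{notPossible} applied to the pair $x_{\tau(l+1)}x_{\tau(l+2)}=x_{\lambda(i+1)}x_{\lambda(i+2)}$ closes the other. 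The net effect is a shorter, more conceptual proof at the cost of partially duplicating the content of Lemma~\ref{notPossible}; the paper's version stays entirely within its pre-established toolkit but pays for it with sixteen-odd positional subcases.
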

\begin{proof}
Because $\sigma \neq \tau$, Lemma~\ref{stabilizerr} yields that
$j\neq i$. We will first prove that $j=l$. We show this by contradiction. So, suppose that $j\neq l$.

Suppose first that $l < \frac{n}2 $. Since $m-1= \tau^{-1}
\lambda(m-1-j+i)$ and $j < l < \frac{n}2$, by Lemma~\ref{other}
$\tau^{-1}\lambda \in \langle t \rangle$.
The assumption yields that $\frac{n}2+1=\tau^{-1} \lambda(\frac{n}2+1 -j+i)$ and thus that $i-j=1$. Hence,
$\tau^{-1}\lambda(\frac{n}2 +1)=\frac{n}2$, which contradicts with $\tau^{-1}\lambda \in \langle t \rangle$.
So $l \geq \frac{n}2$.

Suppose that $l=\frac{n}2$ and $j=1$. Then $i=2$,  $ \sigma(1) =
\lambda(2)$ and $ \sigma(\frac{n}2)= \lambda(\frac{n}2+1)$, in contradiction
with Corollary~\ref{disjoi}.

Suppose that $l=\frac{n}2$ and $j>1$. Since $m= \tau^{-1} \lambda(m-j+i)$,
$\tau^{-1}\lambda \in \langle t \rangle $ by Lemma~\ref{other}.
But this is in contradiction with $\frac{n}2+1= \tau^{-1} \lambda(\frac{n}2+1 -j+i)$
because $j>1$ and $j\neq i$.

So we have shown that $l > \frac{n}2$. Suppose now
that $j \leq \frac{n}2$. Since $j = \sigma^{-1}\lambda(1)$ or $j = \sigma^{-1}\lambda(2)$,
Lemma~\ref{other} then yields that $\sigma^{-1}\lambda \in \langle t \rangle$.
Then $\frac{n}2+1 = \sigma^{-1}\lambda(\frac{n}2+1-j+i)$  implies that
$i-j=1$. Hence we have, $\frac{n}2=\sigma^{-1}\lambda(\frac{n}2+1)$, which is
not possible. So we have obtained that
$j \geq \frac{n}2+1$. Since $\frac{n}2<j<l$, we get a contradiction with
Lemma~\ref{notPossible}. Therefore $j=l$.

Next we prove that $l+1=m$. Again we show this by contradiction. So
suppose that $l+1\neq m$.

Suppose that $l=1$. Since $\sigma \neq \tau$ the assumption and Lemma~\ref{stabilizerr} yield that $i\neq 1$
and thus $i=2$. Consequently, since $j=l$,
$\tau(2)=\lambda(3)$
and $\tau(\frac{n}2)=\lambda(\frac{n}2+1)$, in contradiction with
Corollary~\ref{disjoi}.

Suppose that $1<l<\frac{n}2$. Since $m= \tau^{-1} \lambda(m-j+i)$ and $j =
l < \frac{n}2$, we obtain from Lemma~\ref{other} that $\tau^{-1}\lambda \in \langle t \rangle $.
However, this is in contradiction with $\frac{n}2+1= \tau^{-1} \lambda(\frac{n}2+1 -j+i)$
because $j=l>1$ and $j\neq i$.
So we have shown that $j=l \geq \frac{n}2$. Since $\frac{n}2<l+1<m$,
the assumption leads to a contradiction with Lemma~\ref{notPossible}. Therefore $l+1=m$, as desired
\end{proof}

There are obvious symmetric analogs of
Lemmas~\ref{maxOne}, \ref{stepss}, \ref{step3} and~\ref{overlapp}.
The statement of the symmetric version is obtained in the following way.
\begin{itemize}
 \item Words are written in reversed order.
 \item An integer $i$ which is an index of a generator of $\FM_n$ or which refers to such an index
 is replaced by $j \in \{1, 2, \ldots , n \}$,
 such that $j \equiv -i+1 \ (\mathrm{mod} \ n)$.
 \item Inequalities involving integers which refer to indices of generators of $\FM_n$ are reversed.
\end{itemize}

When these lemmas are used, it is clear
from the context which of the two versions is applicable.

\section{Main Theorem}

In this section, we prove that $S_n(H)$ is a two unique product semigroup. 
As a consequence, $K[S_n(H)]$ is a domain with trivial units which is semiprimitive.

\begin{theorem}\label{tup}
 $S_n$ is a t.u.p. semigroup.
\end{theorem}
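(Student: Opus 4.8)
The plan is to realize $S_n$ as a totally ordered cancellative monoid and then invoke a general principle: if a cancellative semigroup $S$ carries a total order $\le$ compatible with multiplication on both sides (so $a<b$ implies $ca<cb$ and $ac<bc$), then $S$ is t.u.p. Indeed, given non-empty finite $C,D\subseteq S$ with $|C|+|D|>2$, put $c_1=\min C$, $c_2=\max C$, $d_1=\min D$, $d_2=\max D$. For any $c\in C$, $d\in D$ one has $c_1d_1\le cd_1\le cd$, and if $c_1d_1=cd$ then both inequalities are equalities, whence $d=d_1$ and $c=c_1$ by cancellation; thus $c_1d_1$ is the unique minimum of $CD$ and is uniquely expressible, and dually $c_2d_2$ is the unique maximum and uniquely expressible. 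Since $|C|+|D|>2$, at least one of $C,D$ is not a singleton, so $c_1d_1<c_2d_2$, giving the required two distinct uniquely expressible elements. By Proposition~\ref{adjannn}, $S_n$ is cancellative, so it remains to equip $S_n$ with such an order.

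Because the defining relations are homogeneous of degree $n$, the length function $\ell\colon S_n\to\mathbb{N}$ (the common length of all words representing an element) is well defined and additive, $\ell(st)=\ell(s)+\ell(t)$. I would take $\ell$ as the primary key of the order, which makes it automatically compatible with multiplication across different lengths. To break ties inside a single length class, fix the alphabet order $a_1<\dots<a_n$, let $\hat s\in\FM_n$ denote the lexicographically largest word with $\pi(\hat s)=s$, and declare $s\prec s'$ (for $\ell(s)=\ell(s')$) when $\hat s<_{\mathrm{lex}}\hat{s'}$. This is a total order on $S_n$; the content is to show it is compatible with left and right multiplication within a fixed length, since length-only comparison is too coarse to single out a \emph{uniquely} expressible extremal product.

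The crux is to relate the canonical word $\widehat{st}$ to the concatenation $\hat s\,\hat t$. A priori $\widehat{st}$ is obtained from $\hat s\,\hat t$ by a sequence of applications of the defining relations, each of which either lies entirely inside the $\hat s$-block, entirely inside the $\hat t$-block, or straddles the junction between the two blocks. Rewrites confined to one block cannot disturb the comparison coming from the other block, so the whole difficulty is the junction. This is exactly what the overlap machinery controls: Lemma~\ref{maxOne} shows that two relations overlap in at most one letter, and Lemmas~\ref{stepss}, \ref{step3} and~\ref{overlapp} (together with their symmetric analogues) pin down the precise shape a word can take near a point where two representatives first diverge, forcing any admissible junction rewrite to begin with a full initial block $x_{\sigma(1)}\cdots x_{\sigma(n-1)}$ of some $\sigma\in H$. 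Using these rigidity statements and the cancellativity of $S_n$, I would argue that such a junction rewrite cannot reverse the lexicographic comparison inherited from the factors, which yields $s\prec s'\Rightarrow ts\prec ts'$ and $s\prec s'\Rightarrow st\prec s't$.

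The main obstacle is precisely this compatibility step: showing that junction rewrites preserve the order. Everything hinges on the overlap-at-most-one phenomenon that was the point of Section~\ref{quaternionn}; the left-hand analysis uses Lemmas~\ref{stepss}--\ref{overlapp} as stated, while the right-hand analysis uses their reversed-word counterparts described after Lemma~\ref{overlapp}, and Lemma~\ref{stabilizerr} is invoked repeatedly to upgrade coincidences of words to equalities $\sigma=\tau$ in $H$. Once the order is shown compatible, the theorem follows immediately from the general principle of the first paragraph, and, as noted in the introduction, the domain, trivial-units and semiprimitivity statements for $K[S_n]$ are then standard consequences for t.u.p. semigroups.
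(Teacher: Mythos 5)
Your reduction in the first paragraph is sound: a cancellative semigroup carrying a total order compatible with multiplication on both sides is t.u.p. The fatal problem is that the order you then construct is \emph{not} compatible with multiplication, and this is not merely an unverified step --- it is false. Take $s=\pi(x_2x_3\cdots x_n)$ and $s'=\pi(x_{t(2)}x_{t(3)}\cdots x_{t(n)})$, where $t\in H$ is the generator fixed in Section~\ref{quaternionn}. Both elements have length $n-1<n$, so no defining relation applies to any of their representatives and each has a unique representative word; since $t(2)=3>2$, these words already differ in their first letter, so $\hat s<_{\mathrm{lex}}\hat{s'}$ and $s\prec s'$. Now multiply on the left by $a_1$. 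The element $a_1s=\pi(x_1x_2\cdots x_n)$ is represented by exactly the words $x_{\rho(1)}\cdots x_{\rho(n)}$ with $\rho\in H$, and since $H$ is regular (hence transitive) some $\rho$ has $\rho(1)=n$, so $\widehat{a_1s}$ begins with $x_n$. On the other hand, $x_1x_{t(2)}\cdots x_{t(n)}$ is not a relation word (any $\rho\in H$ with $\rho(1)=1$ equals $\id$ by Lemma~\ref{stabilizerr}, and $\id(2)=2\neq t(2)$), so it is the unique representative of $a_1s'$, which therefore begins with $x_1$. Hence $a_1s\succ a_1s'$ while $s\prec s'$: left compatibility fails. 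The same phenomenon --- the element $a_1a_2\cdots a_n$ has representatives beginning with \emph{every} letter, precisely because $H$ is transitive --- defeats the lexicographically smallest normal form as well, and it indicates that no letter-by-letter comparison of normal forms can be bi-invariant here.

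Even apart from this counterexample, the compatibility claim is exactly where your argument stops proving and starts asserting (``I would argue that such a junction rewrite cannot reverse the lexicographic comparison''), and since that claim carries the entire content of the theorem, the proposal contains no proof of it. Note also that t.u.p.\ neither implies nor follows from orderability in general, and the authors explicitly do not know whether even the universal group $G_n(H)$ is t.u.p.; so the existence of \emph{any} bi-invariant total order on $S_n$ is itself a nontrivial question your strategy would first have to settle. The paper's proof takes a different, direct route: starting from a coincidence $cd=c'd'$ with $c\neq c'$ and $d\neq d'$, it uses Lemmas~\ref{stepss}, \ref{step3}, \ref{overlapp} and~\ref{big} to pin down the shape of the four factors near the position where representatives of $c$ and $c'$ first diverge, and then exhibits cross products such as $c'd$ that are uniquely presented. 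Your instinct that the overlap-at-most-one property is the engine of the proof is correct, but it must be applied to the product sets $CD$ directly rather than routed through a normal-form order.
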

\begin{proof}
Let $C,D \subseteq S_n$ be nonempty sets such that $|C|+|D|>2$.

Since the defining relations are homogeneous of length $n$, it is clear that if both $C$ and $D$ have
a unique element of shortest and longest length, then $CD$ has two uniquely presented elements.

Hence, to prove the result, without loss of generality we may assume that all elements of $C$ have
the same length, and also that all elements of $D$ have the same length.

Suppose $cd = c'd'$, with $c,c' \in C$, $d,d'\in D$, and $c\neq c'$ or $d\neq d'$.
Since $S_n$ is cancellative by Proposition~\ref{adjannn}, we actually have
that $c\neq c'$ and $d\neq d'$.

Choose $w_1 \in \pi^{-1}(c)$ and
$w_2 \in \pi^{-1}(c')$ and write in $\FM_n = \langle x_1, \ldots , x_2 \rangle$:
$$w_1=x_{i_1} \cdots  x_{i_r}, \quad  w_2=x_{j_1} \cdots x_{j_r}, \quad
i_1 = j_1, \ldots , i_{k-1} = j_{k-1} , \quad i_k \neq j_k, \quad k \leq r, $$
where $k$ is maximal. \\
Let $x_{i_{r+1}}\cdots x_{i_{r+s}} \in \pi^{-1}(d)$ and $x_{j_{r+1}} \cdots x_{j_{r+s}} \in \pi^{-1}(d')$.
Since
$$\pi(x_{i_1}\cdots x_{i_r}x_{i_{r+1}}\cdots x_{i_{r+s}})=\pi(x_{j_1}\cdots x_{j_r}x_{j_{r+1}}\cdots x_{j_{r+s}})$$
and $S_n$ is cancellative, we have that
$$\pi(x_{i_k}\cdots x_{i_r}x_{i_{r+1}}\cdots x_{i_{r+s}})=\pi(x_{j_k}\cdots x_{j_r}x_{j_{r+1}}\cdots x_{j_{r+s}}).$$
By Lemma \ref{stepss}, $r+s-k+1\geq n$ and there exist distinct $\sigma, \tau \in H$ such that
\begin{equation*}
\begin{array}{r@{}l}
x_{i_k}\cdots x_{i_r}x_{i_{r+1}}\cdots x_{i_{r+s}} &{}= x_{\sigma(1)} \cdots x_{\sigma(n-1)}x_{i_{k+n-1}}\cdots x_{i_{r+s}}, \vspace{3 pt} \\
 x_{j_k} \cdots x_{j_r}x_{j_{r+1}}\cdots x_{j_{r+s}} &{}= x_{\tau(1)} \cdots x_{\tau(n-1)}x_{j_{k+n-1}}\cdots x_{j_{r+s}}.
\end{array}
\end{equation*}
Furthermore, one of the three conditions of Lemma~\ref{stepss} holds.
We claim that $r-k <n-1$. We prove this by contradiction.
So suppose that $r-k\geq n-1$. \\
If condition (1) of Lemma~\ref{stepss} holds, then
\begin{equation*}
\begin{array}{r@{}l}
c&{}= \pi(x_{i_1} \cdots x_{i_r})= \pi(x_{i_1} \cdots x_{i_{k-1}} x_1 \cdots x_n x_{i_{k+n}} \cdots x_{i_r}),  \vspace{3 pt} \\
c'&{}= \pi(x_{j_1} \cdots x_{j_r})= \pi(x_{i_1} \cdots x_{i_{k-1}} x_1 \cdots x_n x_{j_{k+n}} \cdots x_{j_r}).
\end{array}
\end{equation*}
In contradiction with the maximality of $k$. \\
If condition (2) of Lemma~\ref{stepss} holds, then
$$ c= \pi(x_{i_1} \cdots x_{i_r})= \pi(x_{i_1} \cdots x_{i_{k-1}} x_{\tau(1)} \cdots x_{\tau(n)} x_{i_{k+n}} \cdots x_{i_r})$$
and $c'= \pi(x_{j_1}\cdots x_{j_r})$. Since $(i_1, \cdots, i_{k-1}, \tau(1)) =(j_1, \cdots, i_{k-1}, j_k)$, we again get
a contradiction with the maximality of $k$.\\
If condition (3) of Lemma~\ref{stepss} holds, then $c= \pi(x_{i_1}\cdots x_{i_r})$ and
$$ c'=\pi(x_{j_1} \cdots x_{j_r})= \pi(x_{i_1} \cdots x_{i_{k-1}} x_{\sigma(1)} \cdots x_{\sigma(n)} x_{j_{k+n}} \cdots x_{j_r})$$
and we also get a contradiction with the maximality of $k$.
Therefore the claim follows.

Hence $r-k < n-1$ and
$$c= \pi(x_{i_1}\cdots x_{i_{k-1}}  x_{\sigma(1)} \cdots x_{\sigma(r-k+1)}) \quad \mbox{and} \quad
c'= \pi(x_{i_1}\cdots x_{i_{k-1}}  x_{\tau(1)} \cdots x_{\tau(r-k+1)})$$
with $\sigma \neq \tau$. Now using cancellativity and Lemma~\ref{stepss}, it is easy to see that we may assume
that
$$ d= \pi(x_{\sigma(r-k+2)} \cdots x_{\sigma(n)} x_{l_{k+n}} \cdots x_{l_{r+s}})$$
and
$$ d'= \pi(x_{\tau (r-k+2)} \cdots x_{\tau(n)} x_{l_{k+n}} \cdots x_{l_{r+s}})$$
for some $ l_{k+n}, \ \ldots , \ l_{r+s} \in \{1, \ldots ,n \}$.

If $cd'$ and $c'd$ both have a unique presentation
in $CD$ then we are finished.
So assume that one of them does not have a unique presentation in $CD$.
Say $cd'$ does not have a unique presentation. An argument symmetric to the one we shall present
will work for the other case.
Then there exist $c'' \in C$ and $d''\in D$ such that $cd'=c''d''$,
$c''\neq c$ and $d'\neq d''$.

By the method used above, there exist $\lambda, \mu \in H$ and $r-n+1 < k' \leq r$
such that $\lambda \neq \mu$ and
\begin{equation*}
\begin{array}{r@{}l}
c&{}=  \pi(x_{i_1} \cdots x_{i_{k-1}} x_{\sigma(1)} \cdots x_{\sigma(r-k+1)})= \pi(x_{i'_1} \cdots x_{i'_{k'-1}} x_{\lambda(1)} \cdots x_{\lambda(r-k'+1)}), \vspace{3 pt} \\
d'&{}=   \pi(x_{\tau (r-k+2)} \cdots x_{\tau(n)} x_{l_{k+n}} \cdots x_{l_{r+s}}) =
\pi(x_{\lambda (r-k'+2)} \cdots x_{\lambda(n)} x_{l'_{k'+n}} \cdots x_{l'_{r+s}})  ,  \vspace{3 pt} \\
c''&{}=  \pi(x_{i'_1} \cdots x_{i'_{k'-1}} x_{\mu(1)} \cdots x_{\mu(r-k'+1)}) ,  \vspace{3 pt} \\
d''&{}=  \pi(x_{\mu (r-k'+2)} \cdots x_{\mu(n)} x_{l'_{k'+n}} \cdots x_{l'_{r+s}}).
\end{array}
\end{equation*}
for some $i'_1,\ \ldots , \ i'_{k'-1}, \  l'_{k'+n}, \ \ldots , \ l'_{r+s} \in \{1, \ldots ,n \}$.

We claim that either $k'=r-n+2$ and $k=r$, or $k=r-n+2$ and $k'=r$.

Let $v,w'\in \FM_n$ be such that $\pi(v)=c$ and $\pi(w')=d'$. By Lemma \ref{step3} and its right-left
symmetric analog, we know that
for some $\V_1,\V_2,\V_3,\V_4,\W'_1,\W'_2,\W'_3,\W'_4\in \FM_n$ and $\epsilon, \delta, \gamma, \zeta \in H$, we have the following four statements
\begin{equation}\label{fourStatements}
\left\{
\begin{array}{r@{}l@{}r@{}r@{}l}
v&{}=\V_1x_{\sigma(1)} \cdots x_{\sigma(r-k+1)} &{}\hspace{13 pt}  \text{or} \hspace{13 pt}    &{} v&{}=\V_2 x_{\epsilon(2)} \cdots x_{\epsilon(n)} x_{\sigma(2)} \cdots x_{\sigma(r-k+1)}   \\
v&{}=\V_3x_{\lambda(1)} \cdots x_{\lambda(r-k'+1)} &{} \hspace{13 pt}  \text{or} \hspace{13 pt}  &{} v&{}=\V_4 x_{\delta(2)} \cdots x_{\delta(n)} x_{\lambda(2)} \cdots x_{\lambda(r-k'+1)}   \\
w'&{}=x_{\tau (r-k+2)} \cdots x_{\tau(n)}\W'_1 &{} \hspace{13 pt}  \text{or} \hspace{13 pt}&{}  w'&{}=x_{\tau (r-k+2)} \cdots x_{\tau(n-1)}x_{\gamma(1)} \cdots x_{\gamma(n-1)} \W'_2  \\
w'&{}=x_{\lambda (r-k'+2)} \cdots x_{\lambda(n)}\W'_3 &{}\hspace{13 pt}  \text{or} \hspace{13 pt}  &{} w'&{}=x_{\lambda (r-k'+2)} \cdots x_{\lambda(n-1)}x_{\zeta(1)} \cdots x_{\zeta(n-1)} \W'_4.
\end{array} \right.
\end{equation}
We consider two mutually exclusive cases.

{\it Case 1:} Suppose that none of the following four conditions (A), (B), (C) or (D) is satisfied.
\begin{itemize}
 \item[(A)] $k=r$, \ $v=\V_2 x_{\epsilon(2)} \cdots x_{\epsilon(n)}$ \ and \ $\epsilon(n) \neq \sigma(1)$
 \item[(B)] $k'=r$, \ $v=\V_4 x_{\delta(2)} \cdots x_{\delta(n)}$ \ and \ $\delta(n) \neq \lambda(1)$
 \item[(C)] $k=r-n+2$, \  $w'= x_{\gamma(1)} \cdots x_{\gamma(n-1)} \W'_2$ \ and \ $\gamma(1) \neq \tau(n)$
 \item[(D)] $k'=r-n+2$, \  $w'= x_{\zeta(1)} \cdots x_{\zeta(n-1)} \W'_4$ \  and \ $\zeta(1) \neq \lambda(n).$
\end{itemize}

First we show that $k\neq k'$. We prove this by contradiction. So assume $k=k'$. Then from (\ref{fourStatements}),
we get that the last letter of $v$ is $x_{\sigma(r-k+1)}= x_{\lambda(r-k+1)}$ and that the first letter of $w'$ is  $x_{\tau (r-k+2)}=x_{\lambda (r-k+2)}$.
So, by Lemma~\ref{stabilizerr}, $\sigma = \lambda = \tau$, a contradiction. Thus indeed $k\neq k'$. Suppose $k' > k$.

Because of (\ref{fourStatements}), one of the following four equalities holds
\begin{eqnarray}
vw'& \hspace{-9 pt}  =& \hspace{-9 pt} \V_1x_{\sigma(1)} \cdots x_{\sigma(r-k+1)} x_{\tau (r-k+2)} \cdots x_{\tau(n)}\W'_1  \label{1of8}   \vspace{3 pt} \\
 vw'& \hspace{-9 pt}  =& \hspace{-9 pt}\V_1x_{\sigma(1)} \cdots x_{\sigma(r-k+1)} x_{\tau (r-k+2)} \cdots x_{\tau(n-1)}x_{\gamma(1)} \cdots x_{\gamma(n-1)} \W'_2 \label{2of8}  \vspace{3 pt} \\
 vw'& \hspace{-9 pt}  =& \hspace{-9 pt}\V_2 x_{\epsilon(2)} \cdots x_{\epsilon(n)} x_{\sigma(2)} \cdots x_{\sigma(r-k+1)} x_{\tau (r-k+2)} \cdots x_{\tau(n)}\W'_1 \label{3of8}  \vspace{3 pt} \\
 vw'& \hspace{-9 pt}  =& \hspace{-9 pt} \V_2 x_{\epsilon(2)} \cdots x_{\epsilon(n)} x_{\sigma(2)} \cdots x_{\sigma(r-k+1)}
 x_{\tau (r-k+2)} \cdots x_{\tau(n-1)}x_{\gamma(1)} \cdots x_{\gamma(n-1)} \W'_2, \phantom{ ssssss } \label{4of8}
\end{eqnarray}
and also one of the following four equalities holds
\begin{eqnarray}
vw'& \hspace{-9 pt}  =& \hspace{-9 pt} \V_3x_{\lambda(1)} \cdots x_{\lambda(r-k'+1)}x_{\lambda (r-k'+2)} \cdots x_{\lambda(n)}\W'_3  \label{5of8}   \vspace{3 pt} \\
 vw'& \hspace{-9 pt}  =& \hspace{-9 pt}\V_3x_{\lambda(1)} \cdots x_{\lambda(r-k'+1)}x_{\lambda (r-k'+2)} \cdots x_{\lambda(n-1)}x_{\zeta(1)} \cdots x_{\zeta(n-1)} \W'_4 \label{6of8}  \vspace{3 pt} \\
 vw'& \hspace{-9 pt}  =& \hspace{-9 pt}\V_4 x_{\delta(2)} \cdots x_{\delta(n)} x_{\lambda(2)} \cdots x_{\lambda(r-k'+1)}x_{\lambda (r-k'+2)} \cdots x_{\lambda(n)}\W'_3 \label{7of8}  \vspace{3 pt} \\
 vw'& \hspace{-9 pt}  =& \hspace{-9 pt} \V_4 x_{\delta(2)} \cdots x_{\delta(n)} x_{\lambda(2)} \cdots
x_{\lambda(r-k'+1)}x_{\lambda (r-k'+2)} \cdots x_{\lambda(n-1)}x_{\zeta(1)} \cdots x_{\zeta(n-1)} \W'_4. \phantom{ ssssss } \label{8of8}
\end{eqnarray}

There are 16 possible combinations, we treat them in groups of 4. By $|v|$ we denote the length of the word $v$.

In case (\ref{1of8}) and (\ref{5of8}), or  (\ref{1of8}) and (\ref{6of8}), or  (\ref{3of8}) and (\ref{5of8}), or  (\ref{3of8}) and (\ref{6of8})
hold, we take the subword of $vw'$ starting at position $|v|-(r-k')$ until position
$|v|+n-(r-k+1)$. By inspection, one sees that for some $j\geq 2$,
$$ x_{\sigma(j)} \cdots x_{\sigma(r-k+1)}x_{\tau (r-k+2)} \cdots x_{\tau(n)}
= x_{\lambda(1)} \cdots x_{\lambda(n-j+1)}. $$
By Lemma~\ref{overlapp}, $j=r-k+1$ and $n=r-k+2$. Thus $k=r-n+2$.
The first letter of $w'$ is $x_{\lambda (r-k'+2)} = x_{\lambda(n-j+1)} = x_{\lambda(2)}$.
Thus $k'=r$, as claimed.

In case (\ref{1of8}) and (\ref{7of8}), or  (\ref{1of8}) and (\ref{8of8}), or  (\ref{3of8}) and (\ref{7of8}), or  (\ref{3of8}) and (\ref{8of8})
hold, we take the subword of $vw'$ starting at position $|v|-(r-k'-1)$ until position
$|v|+n-(r-k+1)$. By inspection, one sees that for some $j\geq 3$,
$$ x_{\sigma(j)} \cdots x_{\sigma(r-k+1)}x_{\tau (r-k+2)} \cdots x_{\tau(n)}
= x_{\lambda(2)} \cdots x_{\lambda(n-j+2)}. $$
By Lemma~\ref{overlapp}, $j=r-k+1$ and $n=r-k+2$.
The first letter of $w'$ is $x_{\lambda (r-k'+2)} = x_{\lambda(n-j+2)} = x_{\lambda(3)}$.
Thus $k'=r-1$. Then
\begin{eqnarray*}
&&v= \V_1x_{\sigma(1)} \cdots x_{\sigma(n-1)}= \V_4 x_{\delta(2)} \cdots x_{\delta(n)} x_{\lambda(2)} \\
\text{or} \ \ \
&&v= \V_2x_{\epsilon(2)} \cdots x_{\epsilon(n)}x_{\sigma(2)} \cdots x_{\sigma(n-1)}
= \V_4 x_{\delta(2)} \cdots x_{\delta(n)} x_{\lambda(2)}
\end{eqnarray*}
would lead to a contradiction with Lemma \ref{big}.

In case (\ref{2of8}) and (\ref{5of8}), or  (\ref{2of8}) and (\ref{6of8}), or  (\ref{4of8}) and (\ref{5of8}), or  (\ref{4of8}) and (\ref{6of8})
hold, we take the subword of $vw'$ starting at position $|v|-(r-k')$ until position
$|v|+n-(r-k+2)$. By inspection, one sees that for some $j\geq 2$,
$$ x_{\sigma(j)} \cdots x_{\sigma(r-k+1)}x_{\tau (r-k+2)} \cdots x_{\tau(n-1)}
= x_{\lambda(1)} \cdots x_{\lambda(n-j)}. $$
By Lemma~\ref{overlapp}, $j=r-k+1$ and $n=r-k+3$.
The first letter of $w'$ is $x_{\lambda (r-k'+2)} = x_{\lambda(n-j)} = x_{\lambda(2)}$.
Thus $k'=r$. Then
\begin{eqnarray*}
&&w'= x_{\tau(n-1)}x_{\gamma(1)} \cdots x_{\gamma(n-1)} \W'_2 = x_{\lambda (2)} \cdots x_{\lambda(n)}\W'_3 \\
\text{or} \ \ \
&&w'= x_{\tau(n-1)}x_{\gamma(1)} \cdots x_{\gamma(n-1)} \W'_2 = x_{\lambda (2)} \cdots x_{\lambda(n-1)}x_{\zeta(1)} \cdots x_{\zeta(n-1)} \W'_4
\end{eqnarray*}
would lead to a contradiction with Lemma \ref{big}.

In case (\ref{2of8}) and (\ref{7of8}), or  (\ref{2of8}) and (\ref{8of8}), or  (\ref{4of8}) and (\ref{7of8}), or  (\ref{4of8}) and (\ref{8of8})
hold, we take the subword of $vw'$ starting at position $|v|-(r-k'-1)$ until position
$|v|+n-(r-k+2)$. By inspection, one sees that for some $j\geq 3$,
$$ x_{\sigma(j)} \cdots x_{\sigma(r-k+1)}x_{\tau (r-k+2)} \cdots x_{\tau(n-1)}
= x_{\lambda(2)} \cdots x_{\lambda(n-j)}. $$
By Lemma~\ref{overlapp}, $j=r-k+1$ and $n=r-k+3$.
The first letter of $w'$ is $x_{\lambda (r-k'+2)} = x_{\lambda(n-j)} = x_{\lambda(3)}$.
Thus $k'=r-1$. Then
\begin{eqnarray*}
&&v= \V_1x_{\sigma(1)} \cdots x_{\sigma(n-2)}= \V_4 x_{\delta(2)} \cdots x_{\delta(n)} x_{\lambda(2)} \\
\text{or} \ \ \
&&v= \V_2x_{\epsilon(2)} \cdots x_{\epsilon(n)}x_{\sigma(2)} \cdots x_{\sigma(n-2)}
= \V_4 x_{\delta(2)} \cdots x_{\delta(n)} x_{\lambda(2)}
\end{eqnarray*}
would lead to a contradiction with Lemma \ref{big}.

Thus $k=r-n+2$ and $k'=r$. Hence we may assume that (~\ref{fourStatements}) is reduced to
\begin{equation}\label{fourStatementsTwo}
\left\{
\begin{array}{r@{}l@{}r@{}r@{}l}
v&{}=\V_1x_{\sigma(1)} \cdots x_{\sigma(n-1)} &{} \hspace{13 pt}  \text{or} \hspace{13 pt}  &{}  v&{}=\V_2 x_{\epsilon(2)} \cdots x_{\epsilon(n)} x_{\sigma(2)} \cdots x_{\sigma(n-1)} \\
v&{}=\V_3x_{\lambda(1)} &{} &{} &{}  \\
w'&{}= x_{\tau(n)}\W'_1 &{} &{} &{}   \\
w'&{}=x_{\lambda (2)} \cdots x_{\lambda(n)}\W'_3 &{} \hspace{13 pt}  \text{or} \hspace{13 pt}  &{}  w'&{}=x_{\lambda (2)} \cdots x_{\lambda(n-1)}x_{\zeta(1)} \cdots x_{\zeta(n-1)} \W'_4.
\end{array} \right.
\end{equation}
Let $v',w,v'',w''\in \FM_n$ be such that $\pi(v')=c'$, $\pi(w)=d$, $\pi(v'')=c''$ and $\pi(w'')=d''$.
Since $k=r-n+2$ and $k'=r$,
by (\ref{fourStatementsTwo}) and Lemma \ref{step3}  we know that
for some $\V'_1, \V'_2, \W_1, \W_2, \V''_1,\V''_2,\W''_1,\W''_2 \in \FM_n$ and $\eta, \theta, \iota, \kappa \in H$, the following four statements hold
\begin{equation*}
\left\{
\begin{array}{r@{}l@{}r@{}r@{}l}
v'&{}=\V'_1x_{\tau(1)} \cdots x_{\tau(n-1)}  &{}\hspace{13 pt}  \text{or} \hspace{13 pt}  &{} v'&{}=\V'_2 x_{\eta(2)} \cdots x_{\eta(n)}x_{\tau(2)} \cdots x_{\tau(n-1)}    \\
w&{}=x_{\sigma(n)}x_{\lambda (3)} \cdots x_{\lambda(n)}\W_1 &{} \hspace{13 pt}  \text{or} \hspace{13 pt}   &{} w&{}=x_{\sigma(n)}x_{\lambda (3)} \cdots x_{\lambda(n-1)} x_{\theta(1)} \cdots x_{\theta(n-1)} \W_2  \\
v''&{}=\V''_1x_{\sigma(1)} \cdots x_{\sigma(n-2)} x_{\mu(1)}  &{} \hspace{13 pt}  \text{or} \hspace{13 pt}  &{} v''&{}=\V''_2 x_{\iota(2)} \cdots x_{\iota(n)} x_{\sigma(2)} \cdots x_{\sigma(n-2)} x_{\mu(1)}   \\
w''&{}=x_{\mu (2)} \cdots x_{\mu(n)}\W''_1 &{} \hspace{13 pt}  \text{or} \hspace{13 pt}   &{} w''&=x_{\mu (2)} \cdots x_{\mu(n-1)}x_{\kappa(1)} \cdots x_{\kappa(n-1)} \W''_2.
\end{array} \right.
\end{equation*}
Because $\tau \neq \sigma$, by Lemma \ref{stabilizerr} we have $x_{\sigma(n)} \neq x_{\tau(n)}= x_{\lambda(2)}$,
and by Lemma \ref{big} it follows that $c'd =\pi(v')\pi(w)$ is uniquely presented.
Because $\mu \neq \lambda$, by Lemma \ref{stabilizerr} we have $x_{\mu(1)} \neq x_{\lambda(1)}$,
and by Lemma \ref{big} it follows that $c''d'$ is uniquely presented.

By symmetry, if $k > k'$, then $k'=r-n+2$ and $k=r$, and the products $c'd$ and $cd''$ are uniquely presented.

{\it Case 2:} Suppose one of the conditions (A), (B), (C), or (D) is satisfied.

(A) Suppose $k=r$, $v=\V_2 x_{\epsilon(2)} \cdots x_{\epsilon(n)}$ and $\epsilon(n) \neq \sigma(1).$
Because $c=\pi(x_{i_1}\cdots x_{i_{k-1}}  x_{\sigma(1)} )=
\pi(\V_2 x_{\epsilon(2)} \cdots x_{\epsilon(n)})$, by Lemmas \ref{stabilizerr} and \ref{maxOne}, for
$\xi \in H$ with $\xi(n)=\sigma(1)$, we have $x_{i_{k-n+2}}\cdots x_{i_{k-1}}
= x_{\xi(2)} \cdots x_{\xi(n-1)}$.

Let $v'\in \FM_n$ be such that $\pi(v')=c'$. Since $c'=\pi(x_{i_1}\cdots x_{i_{k-1}}  x_{\tau(1)} )$
and $\tau \neq \sigma$, by Lemmas \ref{stabilizerr} and \ref{maxOne}
it follows that $v'= \V' x_{i_{k-n+2}}\cdots x_{i_{k-1}}  x_{\tau(1)}
= \V' x_{\xi(2)} \cdots x_{\xi(n-1)}  x_{\tau(1)} $ for $\V' \in \FM$
with $\pi(\V') = \pi(x_{i_1}\cdots x_{i_{k-n+1}})$.

Let $w \in \FM_n$ be such that $\pi(w)=d$. Because $k=r$, by Lemma \ref{maxOne} there exists $\W \in \FM_n$ such
that $w= x_{\sigma(2)} \cdots x_{\sigma(n-1)}\W$.

We then consider the product \\
$\pi(v')\pi(w)=\pi(\V' x_{\xi(2)} \cdots x_{\xi(n-1)}  x_{\tau(1)})\pi(x_{\sigma(2)} \cdots x_{\sigma(n-1)}\W)=c'd$.
Because $\sigma \neq \tau$ and $\xi(n) = \sigma(1)$, by Lemmas \ref{stabilizerr} and \ref{big},
$c'd$ has a unique presentation.

Because $v=\V_2 x_{\epsilon(2)} \cdots x_{\epsilon(n)}$, and
$ v=\V_3x_{\lambda(1)} \cdots x_{\lambda(r-k'+1)}$ or $v=\V_4 x_{\delta(2)} \cdots x_{\delta(n)} x_{\lambda(2)} \cdots x_{\lambda(r-k'+1)},  $
unless $k'=r$ we would get a contradiction with Lemma \ref{maxOne}.

Since $cd'=c''d''$ and $d''\neq d'$,  we know that
$c''=\pi(\V''x_{\theta(2)} \cdots x_{\theta(n-1)}  x_{\mu(1)})$,  for $\V''\in \FM$ and
$\theta \in H$ such that
$\pi(\V'')=\pi(x_{i_1} \cdots x_{i_{k-n}} x_{\theta(1)})$
and $x_{\theta(n)}=x_{\lambda(1)}\neq x_{\mu(1)}$.
Then by Lemmas \ref{stabilizerr} and \ref{big},
$c''d'= \pi(\V''x_{\theta(2)} \cdots x_{\theta(n-1)}  x_{\mu(1)})
 \pi(x_{\lambda (2)} \cdots x_{\lambda(n-1)} \W'_5)$ with $\W'_5\in FM_n$
has a unique presentation.

(B) Suppose $k'=r$, $v=\V_4 x_{\delta(2)} \cdots x_{\delta(n)}$ and $\delta(n) \neq \lambda(1)$.
Because $c=\pi(x_{i_1}\cdots x_{i_{k-1}}  x_{\sigma(1)} )=
\pi(\V_4 x_{\delta(2)} \cdots x_{\delta(n)})$, by Lemmas \ref{stabilizerr} and \ref{maxOne}, for
$\xi \in H$ with $\xi(n)=\sigma(1)$, we have $x_{i_{k-n+2}}\cdots x_{i_{k-1}}
= x_{\xi(2)} \cdots x_{\xi(n-1)}$. As in case (A), we obtain the uniquely presented products
$c'd$ and $c''d'$.

(C) Suppose $k=r-n+2$, $w'= x_{\gamma(1)} \cdots x_{\gamma(n-1)} \W'_2$ and $\gamma(1) \neq \tau(n)$. \\
Because $d'=\pi(x_{\tau(n)} x_{l_{k+n}}   \cdots x_{l_{r+s}})=
\pi(x_{\gamma(1)} \cdots x_{\gamma(n-1)} \W'_2)$, by Lemmas \ref{stabilizerr} and \ref{maxOne}, for
$\phi \in H$ with $\phi(1)=\tau(n)$, we have $x_{l_{k+n}} \cdots x_{l_{k+2n-3}}
= x_{\phi(2)} \cdots x_{\phi(n-1)}$.

Let $w\in \FM_n$ be such that $\pi(w)=d$. Since $d=\pi(x_{\sigma (n)} x_{l_{k+n}}   \cdots x_{l_{r+s}} )$
and $\sigma \neq \tau$, by Lemmas \ref{stabilizerr} and \ref{maxOne}
it follows that $w= x_{\sigma(n)} x_{l_{k+n}} \cdots x_{l_{k+2n-3}} \W
=  x_{\sigma(n)} x_{\phi(2)} \cdots x_{\phi(n-1)} \W $ for $\W \in \FM $
with $\pi(\W) = \pi(x_{l_{k+2n-2}}\cdots x_{l_{r+s}})$.

Let $v'\in \FM_n$ be such that $\pi(v')=c'$. Because $k=r-n+2$, by Lemma \ref{maxOne} there exists $\V' \in \FM_n$ such
that $v'= \V' x_{\tau(2)} \cdots x_{\tau(n-1)}$.

We then consider the product \\
$\pi(v')\pi(w)=\pi( \V' x_{\tau(2)} \cdots x_{\tau(n-1)})\pi(x_{\sigma(n)} x_{\phi(2)} \cdots x_{\phi(n-1)} \W)=c'd$.
Because $\sigma \neq \tau$ and $\phi(1)=\tau(n)$, by Lemmas \ref{stabilizerr} and \ref{big},
$c'd$ has a unique presentation.

Because $w'= x_{\gamma(1)} \cdots x_{\gamma(n-1)} \W'_2$, and  \\
$ w'=x_{\lambda (r-k'+2)} \cdots x_{\lambda(n)}\W'_3$ or $w'=x_{\lambda (r-k'+2)} \cdots x_{\lambda(n-1)}x_{\zeta(1)} \cdots x_{\zeta(n-1)} \W'_4,  $
unless $k'=r-n+2$ we would get a contradiction with Lemma \ref{maxOne}.

Since $cd'=c''d''$ and $c''\neq c$,  we know that
$d''=\pi(x_{\mu (n)} x_{\rho(2)} \cdots x_{\rho(n-1)} \W'')$,  for $\W''\in \FM$ and
$\rho \in H$ such that
$\pi(\W'')=\pi( x_{i_{\rho(n)}} x_{l_{k+2n-1}}\cdots x_{l_{r+s}} )$
and $x_{\rho (1)}=x_{\lambda(n)}\neq x_{\mu(n)}$.
Then by Lemmas \ref{stabilizerr} and \ref{big},
$cd''= \pi(\V_5 x_{\lambda (2)} \cdots x_{\lambda(n-1)})
\pi(x_{\mu (n)} x_{\rho(2)} \cdots x_{\rho(n-1)} \W'')$ with $\V_5 \in \FM_n$
has a unique presentation.

(D) Suppose $k'=r-n+2$, $w'= x_{\zeta(1)} \cdots x_{\zeta(n-1)} \W'_4$ and $\zeta(1) \neq \lambda(n)$ . \\
Because $d'=\pi(x_{\tau(n)} x_{l_{k+n}}   \cdots x_{l_{r+s}})=
\pi(x_{\zeta(1)} \cdots x_{\zeta(n-1)} \W'_4)$, by Lemmas \ref{stabilizerr} and \ref{maxOne}, for
$\phi \in H$ with $\phi(1)=\tau(n)$, we have $x_{l_{k+n}} \cdots x_{l_{k+2n-3}}
= x_{\phi(2)} \cdots x_{\phi(n-1)}$.
As in case (C), we obtain the uniquely presented products
$c'd$ and $cd''$.
\end{proof}

 Let $K$ be a field. Since the defining relations of $S_n$ are
homogeneous with respect to the total degree, the $K$-algebra
$K[S_n]$ is a graded algebra.
\begin{theorem}\label{domai}
$K[S_n(H)]$ is a domain with trivial units. The Jacobson radical
$\mathcal{J}(K[S_n(H)])=0$.
\end{theorem}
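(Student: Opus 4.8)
The plan is to derive all three assertions from the two unique product property of Theorem~\ref{tup} together with the homogeneity of the defining relations, which makes length a well-defined monoid homomorphism $S_n\to(\mathbb{N},+)$. I first record the usual support argument. For nonzero $\alpha,\beta\in K[S_n]$ write $\alpha=\sum_{c\in C}\alpha_c\,c$ and $\beta=\sum_{d\in D}\beta_d\,d$, where $C=\supp(\alpha)$ and $D=\supp(\beta)$ are finite nonempty subsets of $S_n$ and all $\alpha_c,\beta_d$ are nonzero. Since $S_n$ is t.u.p., in particular it is u.p., so some $c_0d_0\in CD$ is uniquely expressible as a product of an element of $C$ with an element of $D$; its coefficient in $\alpha\beta$ is then exactly $\alpha_{c_0}\beta_{d_0}\neq0$, whence $\alpha\beta\neq0$. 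Thus $K[S_n]$ is a domain.

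Next I would show the units are trivial. Because length is a homomorphism $S_n\to(\mathbb{N},+)$, a unit $c$ of the monoid $S_n$, say with $cd=1$, has length $0$, so $c=1$; hence the identity is the only unit of $S_n$ and a trivial unit of $K[S_n]$ is precisely a nonzero scalar in $K\cdot1$. Now let $\alpha$ be a unit, $\alpha\beta=1$, and set $C=\supp(\alpha)$, $D=\supp(\beta)$. If $|C|+|D|>2$, then Theorem~\ref{tup} provides two distinct elements of $CD$ that are uniquely expressible as products from $C$ and $D$; each contributes a nonzero coefficient to $\alpha\beta$, so $|\supp(\alpha\beta)|\geq2$, contradicting $\alpha\beta=1$. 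Hence $|C|=|D|=1$, say $\alpha=\alpha_c\,c$ and $\beta=\beta_d\,d$ with $cd=1$; by the preceding remark $c=1$, so $\alpha=\alpha_c\cdot1$ is a trivial unit.

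For the Jacobson radical $\mathcal{J}:=\mathcal{J}(K[S_n])$ I would use the characterization of $\mathcal{J}$ as the largest two-sided ideal $I$ for which $1+x$ is a unit for every $x\in I$. Given $x\in\mathcal{J}$, the element $1+x$ is a unit, hence by the previous paragraph a nonzero scalar $\lambda$, so $x=\lambda-1\in K\cdot1$. Therefore $\mathcal{J}\subseteq K\cdot1$. But $\mathcal{J}$ is a proper ideal, so it contains no unit, while every nonzero element of $K\cdot1$ is a unit; thus $\mathcal{J}=0$ and $K[S_n]$ is semiprimitive.

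The entire substance of the argument is Theorem~\ref{tup}; the three steps above are standard support bookkeeping together with the elementary characterization of the radical via $1+x$. I therefore expect no genuine obstacle here, the only points needing care being the homogeneity remark that pins down the units of the monoid and the passage from ``$CD$ has two uniquely expressible elements'' to ``$\alpha\beta$ has support of size at least two.''
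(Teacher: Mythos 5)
Your proposal is correct and follows essentially the same route as the paper: the paper's proof is a one-line appeal to Theorem~\ref{tup} together with the standard facts from Okni\'nski's book (that a t.u.p.\ semigroup algebra is a domain with trivial units and is semiprimitive), and your argument is simply a self-contained write-out of exactly those facts. All three steps --- the u.p.\ support argument for the domain property, the two-unique-product argument pinning the support of a unit to a single point combined with the length grading to identify that point with $1$, and the quasi-regularity characterization forcing $\mathcal{J}\subseteq K\cdot 1$ and hence $\mathcal{J}=0$ --- are sound.
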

\begin{proof}
The result follows from Theorem \ref{tup},
\cite[Theorem~10.4]{book-jan} and
\cite[Corollary~10.5]{book-jan}.
\end{proof}

\vspace{30pt}
 \noindent \begin{tabular}{llllllll}
 F. Ced\'o && E. Jespers  \\
 Departament de Matem\`atiques &&  Department of Mathematics \\
 Universitat Aut\`onoma de Barcelona &&  Vrije Universiteit Brussel  \\
08193 Bellaterra (Barcelona), Spain    && Pleinlaan
2, 1050 Brussel, Belgium \\
 cedo@mat.uab.cat && efjesper@vub.ac.be\\
   &&   \\
G. Klein &&  \\ Department of Mathematics &&
\\  Vrije Universiteit Brussel && \\
Pleinlaan 2, 1050 Brussel, Belgium &&\\ gklein@vub.ac.be&&
\end{tabular}
\end{document}